\documentclass[12pt,reqno]{amsart}
  \usepackage{latexsym}
  \usepackage{mathtools}
  \usepackage[all]{xy}
  \usepackage{amsfonts} 
  \usepackage{amsthm} 
  \usepackage{amsmath} 
  \usepackage{amssymb}
  \usepackage{pifont}  
  \usepackage{enumerate}
  \usepackage{dcolumn}
  \usepackage{comment}
  \usepackage{hyperref}
  \usepackage{enumitem}
  \usepackage{tikz}
   \usepackage[capitalise]{cleveref}
\usetikzlibrary{arrows}
\newcolumntype{2}{D{.}{}{2.0}}
  \xyoption{2cell}

  \def\Label#1{\label{#1}\ifmmode\llap{[#1] }\else 
  \marginpar{\smash{\hbox{\tiny [#1]}}}\fi} 
  \def\Label{\label} 

  \newtheorem{proposition}{Proposition}[section]
  \newtheorem{lemma}[proposition]{Lemma} 
  \newtheorem{corollary}[proposition]{Corollary} 
  \newtheorem{theorem}[proposition]{Theorem} 
  
\theoremstyle{definition} 
  \newtheorem{definition}[proposition]{Definition}
    
  \newtheorem{example}[proposition]{Example}

  \theoremstyle{remark} 
  \newtheorem{remark}[proposition]{Remark}

 \newcounter{zlist}

  \newcounter{blist}

  \newcounter{rlist}

  \newcounter{c} 
  \renewcommand{\[}{\setcounter{c}{1}$$} 
  \newcommand{\etyk}[1]{\vspace{-7.4mm}$$\begin{equation}\Label{#1} 
  \addtocounter{c}{1}} 
  \renewcommand{\]}{\ifnum \value{c}=1 $$\else \end{equation}\fi} 
   \numberwithin{equation}{section}

\def\NN{{\mathbb N}}

\def\ZZ{{\mathbb Z}}

\newcommand{\bB}{\mathrm{B}}

\newcommand{\hH}{\mathrm{H}}

\newcommand{\lL}{\mathrm{L}}

\newcommand{\rR}{\mathrm{R}}

\newcommand{\tT}{\mathrm{T}}

\newcommand{\zZ}{\mathrm{Z}}

\def\ll{\langle}
\def\rr{\rangle}

\newcommand{\im}{{\rm Im}\,}

   \def\<{{\langle}} 
   \def\>{{\rangle}}

   \def\text#1{{\rm {\rm #1}}}

   \def\|#1{\overline{#1}}
   
   \def\h#1 {\hat{#1}}

\title{Boolean trusses as rectangular bands and Boolean rings}
\author{Ryszard R. Andruszkiewicz}
\address{
Faculty of Mathematics, University of Bia{\l}ystok, K.\ Cio{\l}kowskiego  1M,
15-245 Bia\-{\l}ys\-tok, Poland}

\email{r.andruszkiewicz@uwb.edu.pl}
\author{Bernard Rybo\l owicz}
\address{Independent Researcher}\email{bernard.rybolowicz@icloud.com}

\subjclass[2020]{20N10,16U40,06E75}

\keywords{Truss; ring; semigroup; idempotent}

\begin{document}

\begin{abstract}
We introduce Boolean trusses, that is, trusses in which every element is idempotent. Every Boolean truss admits a decomposition into a truss arising from a Boolean ring and a rectangular truss. Furthermore, we classify all Boolean trusses up to isomorphism.
\end{abstract}  
\maketitle

\section{Introduction}

It is well known that Boolean rings and Boolean algebras are closely related and can be treated as equivalent algebraic structures. A classical result due to Stone states that every Boolean ring is isomorphic to a ring of sets \cite{Stone1936}. In other words, every Boolean ring admits a representation as a concrete collection of subsets of a suitable set, closed under symmetric difference and intersection. However, such a representation need not be the full power set of a set. For example, the Boolean ring of all finite subsets of $\mathbb{N}$ is countable, whereas the power set of any infinite set is uncountable. For further background on Boolean algebras and Stone duality, we refer the reader to \cite{Johnstone}.

If the additive group structure of a Boolean ring is forgotten, the remaining algebraic structure is an idempotent semigroup called a band, and, in particular, it is commutative. In contrast to Boolean rings, bands need not be commutative. In 1954, Clifford \cite{Clifford1954} extended a result of McLean \cite{McLean1954} and proved that every band is a semilattice of rectangular bands, that is, bands that are direct products of two semigroups with operations $ab=a$ and $ab=b$. Although rectangular bands are classical objects in semigroup theory, they continue to appear in modern research. See, for example, \cite{Cameron,Khrypchenko}.

Recently, in \cite{Brzezinski2019}, T. Brzezi\'nski introduced a new algebraic structure, a truss. Trusses were introduced as a bridge between braces, which were introduced in \cite{Rump2007} to study solutions of set-theoretic Yang-Baxter equations, and rings. By replacing the additive group binary operation with a ternary Abelian heap operation in the axioms of a ring, one obtains a truss. A two-sided brace is a special case of a truss. A truss is a generalisation of a ring and can be associated with particular congruence classes of a ring; see \cite{ABR}.

Following the idea of Boolean rings, i.e. rings in which every element is an idempotent, we study Boolean trusses, that is, trusses in which every element is an idempotent. Every Boolean ring is commutative, while for Boolean trusses this is not true in general. On the other hand, in semigroup theory there are noncommutative semigroups that consist only of idempotents, for example, the so-called left zero semigroup $L$, where $ab=a$ for all $a,b\in L$, and the right zero semigroup $R$, where $ab=b$ for all $a,b\in R$. These two examples of semigroup operations do not form a ring with any Abelian group, but they form a truss with every Abelian heap. Such trusses we will call left zero and right zero trusses, respectively.

We show that Boolean rings, left zero semigroups, and right zero semigroups contribute to a description of Boolean trusses. That allows us to fit them into ring theory as particular congruence classes. The problem of describing all Boolean trusses arose during the Bia\l ystok University Algebra Seminar. This paper addresses this problem and describes all Boolean trusses up to isomorphism.

The article is organised as follows:

In the preliminaries, we recall all the necessary facts about bands, heaps, and trusses. In the main part, we introduce Boolean trusses as trusses in which every element is idempotent and rectangular trusses as heaps with a compatible rectangular band structure. In the main theorem, Theorem \ref{thm:main}, we show that every Boolean truss $T$ decomposes into a product of four trusses:
$$T\cong T_1\times T_2\times T_3\times T_4.$$
The product $T_3\times T_4$ is a rectangular truss, where  $T_3$ is left zero truss and $T_4$ is right zero truss. In analogy to the band theory, we show that every rectangular truss is a product of left zero truss and right zero truss. The product $T_1\times T_2$ is a truss constructed on the product $A\times B$ of Boolean rings $A$ and $B$. In particular, for $T_1$, the truss multiplication is given by the Jacobson circle operation on $A$, which corresponds to the sum of sets in the Boolean algebra associated with $A$. The truss on $A$ will be denoted by $\tT(A,\circ)$. For $T_2$, the truss multiplication on $B$ is the Boolean ring multiplication, and we denote such a Boolean truss by $\tT(B)$.

Furthermore, we show when two Boolean trusses are isomorphic. Theorem \ref{thm:iso} let us restrict our consideration to the case when components corresponding to products of Boolean rings are isomorphic. This also simplifies our consideration to commutative Boolean trusses. In Theorem \ref{thm:class1}, we show that two Boolean trusses $$\tT(A,\circ)\times\tT(B)\quad \text{and} \quad\tT(A',\circ)\times \tT(B')$$ constructed from Boolean rings $A,A',B,B'$ are isomorphic if and only if $$A'\cong (1-a)A\oplus bB\quad \text{and} \quad B'\cong aA\oplus (1-b)B$$ for some $a\in A$ and $b\in B$. This lets us characterise commutative Boolean trusses on $A\oplus B$ based on whether either of $A$ or $B$ is principal.

We conclude the paper with two examples. In the first, Example \ref{ex:15}, we count all non-isomorphic Boolean trusses on a finitely generated Abelian group. In the second one, Example \ref{ex:fartoolong}, we study the Boolean ring $\prod_{i\in\NN}F_i$, where $F_i\cong\ZZ_2$, its ideal $P=\bigoplus_{i\in\NN}F_i$, and the maximal ideal $R$ containing $P$. We show that $R$ cannot be decomposed into a direct sum of two non-principal ideals and show that the only commutative Boolean trusses corresponding to $R$ are $\tT(R)$ and $\tT(R,\circ)$. For $P$, we show that there are three non-isomorphic commutative Boolean trusses, $\tT(P),\tT(P,\circ)$, and $\tT(P,\circ)\times \tT(P)$. Finally, we describe all the commutative Boolean trusses on $R\times P$. Let $\tilde{R}$ denote $R$ with adjoined identity as in \cite[Page 3]{DRKIR}. Surprisingly, by the power of a Stone space, there exist $2^{\aleph_0}$ non-isomorphic maximal ideals $R$ containing $P$ in $F$ such that $\tilde{R}\cong F.$

\section{Preliminaries}
\subsection{Bands}
First, let us recall the notion of a band, as bands will form the
underlying structure of the objects of interest. A {\bf band} is a semigroup $(B,\cdot)$ such that $b\cdot b=b$ for all $b\in B.$ That is, a band or ``a band of groups of order one" (See \cite{Clifford1954}) is exactly an idempotent semigroup. The following three examples will be of particular interest to us:

\begin{example}
     Let $L,R$ be sets. Then:
     \begin{enumerate}
         \item $L$ together with the operation $a\cdot b=a$ for all $a,b\in L$ is called a {\bf left zero band}.
         \item $R$ together with the operation $a\cdot b=b$ for all $a,b\in R$ is called a {\bf right zero band}.
         \item A band $B$ is called a {\bf rectangular band} if $aba=a$ for all $a,b\in B.$
     \end{enumerate}
\end{example}

The following lemma shows that every rectangular band is the direct product of a left zero band and a right zero band; see McLean \cite{McLean1954} and  Kimura \cite[Lemma 1]{Kimura1958}.

\begin{lemma}\label{lem:rect}
    A band $B$ is a rectangular band if and only if $B\cong L\times R$ for some left zero band $L$ and right zero band $R.$
\end{lemma}

In fact, every band can be described in terms of a semilattice of rectangular bands, by a
result of McLean \cite{McLean1954} sharpened by Clifford
\cite{Clifford1954}.

\begin{theorem}[\cite{McLean1954} and \cite{Clifford1954}]\label{thm:cliff-McLean}
    Every band is a semilattice of rectangular bands.
\end{theorem}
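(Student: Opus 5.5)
We prove the Clifford--McLean theorem (Theorem \ref{thm:cliff-McLean}) by building the greatest semilattice image of $B$ and checking that its classes are rectangular bands, using Lemma \ref{lem:rect} only to name those classes. Define a relation on $B$ by $a\sim b$ if and only if $aba=a$ and $bab=b$; equivalently, $a$ and $b$ generate the same two-sided principal ideal, so $\sim$ is Green's relation $\mathcal{J}$ (which coincides with $\mathcal{D}$ in a band). The plan has four steps: show $\sim$ is a congruence, show $B/{\sim}$ is a semilattice, show each $\sim$-class is a rectangular subband, and conclude.

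The first tool is the identity
\[
xyx\,y\,xyx = xyx \quad\text{in any band.}
\]
Expanding, $xyxyxyx=(xy)^3x=xyx$ by idempotency. Two consequences follow:
\begin{enumerate}[label=(\roman*)]
\item For all $x,y$ we have $xy\sim yx$. Indeed $(xy)(yx)(xy)=xy\,x\,xy=(xy)^2=xy$ after using $yy=y$ and $xx=x$, and the other equality is symmetric.
\item For all $x,y$ we have $xyx\sim xy$. This follows from $xy\cdot xyx\cdot xy=(xy)^3=xy$ and $xyx\cdot xy\cdot xyx=xyx$.
\end{enumerate}
Reflexivity and symmetry of $\sim$ are immediate. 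For transitivity, and for compatibility with multiplication, we pass to the ideal description: $a\sim b$ iff $BaB\cup\ldots=BbB\cup\ldots$, i.e.\ $a\in B^1bB^1$ and $b\in B^1aB^1$. In a band, $a\in B^1bB^1$ forces $a=ab\,a$ type identities, since $a=ubv$ gives $aba=ubv\,b\,ubv$, which reduces to $a$ using idempotents. This makes $\sim$ an equivalence, namely $\mathcal{J}$.

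Compatibility is the step I expect to be the main obstacle. It must show that $a\sim b$ implies $ac\sim bc$ and $ca\sim cb$; in general $\mathcal{J}$ is not a congruence, so band identities are essential. The approach is to show $acbca\,c=ac$-type reductions. Concretely, from $a=aba$ we get
\[
ac=abac\in B^1\,bc\,B^1\,?
\]
which is not directly of the required form. Instead I would use (i) and (ii) to move letters: $ac\sim cac\cdots$. The cleanest route is to prove first that $xy$ and $xzy$ lie in the same $\mathcal{J}$-class whenever $z\in B^1xB^1\cap B^1yB^1$, then verify $ac\mathrel{\mathcal{J}}abc\mathrel{\mathcal{J}}bac\mathrel{\mathcal{J}}babc=bc$ step by step. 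If this becomes too messy, a fallback is the standard fact that in a band $J_{xy}=J_{yx}$ and $J_{x}\ge J_{xy}$, so $J$-classes multiply as $J_aJ_c\subseteq J_{ac}$, with $J_{ac}$ depending only on the pair $(J_a,J_c)$ via meets in the poset of ideals.

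Once $\sim$ is a congruence, $B/{\sim}$ is a band by idempotency and is commutative by (i), hence a semilattice. Each class $C$ is a subsemigroup: if $a,b\in C$, then $[ab]=[a][b]=[a][a]=[a]$. For $a,b\in C$ the definition gives $aba=a$, so $C$ is a rectangular band; by Lemma \ref{lem:rect} it is $L\times R$. Therefore $B$ is the semilattice $B/{\sim}$ of the rectangular bands $C$.
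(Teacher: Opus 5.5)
The paper only quotes this theorem from McLean and Clifford, so there is no internal proof to compare against. Judged on its own, your framework is the standard one: the relation $aba=a$, $bab=b$, identities (i) and (ii), a commutative quotient, and rectangular classes. But the step that carries the content of the theorem, compatibility of $\sim$ with multiplication, is not proved.

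Your auxiliary lemma is false as stated. The hypothesis $z\in B^1xB^1\cap B^1yB^1$ says that $z$ lies $\mathcal{J}$-below both $x$ and $y$. In the two-element semilattice $\{0,1\}$ with $x=y=1$ and $z=0$, you get $xzy=0$, which is not $\mathcal{J}$-related to $xy=1$. The hypothesis that would make it plausible is $xy\in B^1zB^1$.

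Independently, the link $abc\mathrel{\mathcal{J}}bac$ in your chain is just $ab\sim ba$ multiplied on the right by $c$. That is an instance of the very compatibility you are trying to prove. Your fallback lists the right ingredients ($J_{xy}=J_{yx}$ and $J_{xy}\le J_x$), but then simply asserts $J_aJ_c\subseteq J_{ac}$, which restates compatibility rather than deriving it.

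A smaller point: $a=ubv\Rightarrow aba=a$ is true, but it is not a mere cancellation of squares. With $c=bvb$ one has $bv=cv$, hence $ubv\,b\,ubv=uc\,u\,cv=(uc)^2v=ucv=ubv$.

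The fallback ingredients do suffice once the deduction is written out. Read (i) as $J_{st}=J_{ts}$, and use the trivial inequalities $J_{st}\le J_s$ and $J_{st}\le J_t$, where $J_s\le J_t$ means $s\in B^1tB^1$. If $a\sim b$, then $ac=abac$, so
\begin{equation*}
J_{ac}=J_{a(bac)}\le J_{bac}=J_{acb}\le J_{cb}=J_{bc}.
\end{equation*}
The two equalities come from (i) applied to the pairs $(b,ac)$ and $(c,b)$. Since $b=bab$, the symmetric argument gives $J_{bc}\le J_{ac}$, so $ac\sim bc$.

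Likewise $ca=caba$ gives $J_{ca}\le J_{cab}=J_{bca}\le J_{bc}=J_{cb}$, using (i) on the pair $(ca,b)$. The symmetric argument then gives $ca\sim cb$.

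With compatibility in place, the rest of your argument goes through: the quotient is a commutative band by (i), and each class is a subband on which $aba=a$ holds.
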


\subsection{Heaps and Trusses}

In 1920's Baer \cite{Baer29}, Pr\"ufer \cite{Pruf24} and Su\v skevi\v c \cite{Sus37} introduced an algebraic structure called a heap. For our purposes we will consider only Abelian case. An \textbf{Abelian heap} is a set $H$ together with a ternary operation such that for all $a_1,a_2,a_3,a_4,a_5\in H:$
 
\begin{subequations}
\begin{align} [[ a_1,a_2,a_3 ] , a_4,a_5 ]  &= [ a_1,a_2,[ a_3,a_4,a_5 ]  ],\\  [ a_1,a_2,a_2 ] =a_1 &\ \&\  a_2 = [ a_2,a_1,a_1 ]\label{eq:mal}, \\  [ a_1,a_2,a_3]&=[ a_3,a_2,a_1].
    \end{align}
\end{subequations} 
 A {\bf subheap} of an Abelian heap $(H,[-,-,-])$ is a subset $S\subseteq H$ such that $(S,[-,-,-])$ is an Abelian heap and a \textbf{ heap homomorphism} from an Abelian heap $H$ to an Abelian heap $S$ is a function $\varphi:H\to S$ such that for all $a,b,c\in H,$
$$
\varphi([ a,b,c ] )=[ \varphi(a),\varphi(b),\varphi(c) ] .
$$

There is a deep connection between Abelian heaps and Abelian groups. For any fixed $e\in H,$ we define a binary operation $+_e$ in $H$ by
$$
a+_eb=[ a,e,b].
$$
The pair $(H,+_e)$ is an abelian group (\textbf{called a retract of $H$ in $e$}) and $-_eh=[ e,h,e ],$ for any $h\in H.$ Moreover, for any $e,f\in H,$ groups $(H,+_e)$ and $(H,+_f)$ are isomorphic. 

Consider an Abelian group $(A,+)$ and the ternary operation $[ a,b,c]=a-b+c$ for any $a,b,c\in A,$ then the pair $\mathrm{H}(A):=(A,[-,-,-])$ is an Abelian heap. In greater generality, if $S\leq A$ and $e\in A$, then $(e+S,[-,-,-])$ is an Abelian heap.

Conceptually, an Abelian heap is an Abelian group up to the choice of a neutral element; that is, it is an affine analogue of an Abelian group.

Recently, in \cite{Brzezinski2019} an algebraic structure of a {\bf truss} was introduced as an Abelian heap $(T,[ -,-,- ] )$ endowed with a semigroup operation $\cdot:T\times T\to T$ such that for all $a,b,c,d\in T,$
    \begin{equation}
  a\cdot [ b,c,d ]=[  a\cdot b, a\cdot c,  a\cdot d ]  \quad \&\quad [ b,c,d ] \cdot a=[ b\cdot a,c\cdot a,d\cdot a ] .
    \end{equation}
     A {\bf subtruss} of a truss $(T,[-,-,-],\cdot)$ is a subset $S\subseteq T$ such that $(S,[-,-,-],\cdot)$ is a truss and   a \textbf{truss homomorphism} $\varphi$ between two trusses $T$ and $S$ is a heap homomorphism which is also a semigroup homomorphism, that is, 
    $$
    \forall{a,b\in T}\quad  \varphi(ab)=\varphi(a)\varphi(b).
    $$

In particular, trusses generalise rings as follows: If $(T,[-,-,-],\cdot)$ is a truss and there exists $a\in T$ such that for all $t\in T$ $at=ta=a,$ then $(T,+_a,\cdot)$ is a ring, the element $a$ is called an {\bf absorber}. Furthermore, if $a\in T$ is an absorber, then for any surjective homomorphism of trusses $\varphi:T\to S,$ $\varphi(a)$
is the absorber in $S.$ Indeed, for all $s\in S$ there exists $t\in T$ such that $\varphi(t)=s$ and
\begin{equation}
    s\cdot \varphi(a)=\varphi(ta)=\varphi(a)=\varphi(at)=\varphi(a)\cdot s.
\end{equation}
Therefore, $\varphi(a)$ is the absorber in $S.$ Moreover, $\varphi:(T,+_a,\cdot)\to (S,+_{\varphi(a)},\cdot)$ is a ring homomorphism as it preserves the absorber (zero of the ring) and \begin{equation}\label{eq:abs}
    \varphi(c+_ad)=\varphi([c,a,d])=[\varphi(c),\varphi(a),\varphi(d)]=\varphi(c)+_{\varphi(a)}\varphi(d)
\end{equation}
for all $c,d\in T.$
The following trusses examples will be useful in what follows.
\begin{example}\label{ex:jacobtruss}
Let $R$ be an arbitrary associative ring and $\mathrm{H}(R)$ be a heap on a set $R$ with ternary operation $[ a,b,c]=a-b+c$ for all $a,b,c\in R$:
    \begin{itemize}
        \item Then $\mathrm{H}(A)$ with $a\cdot b=ab$ is a truss which we denote by $\mathrm{T}(R)$.
        \item Then $\mathrm{H}(A)$ with $a\circ b=a+b+ab$ is a truss which will be called a \textbf{circle truss} and denoted by $\tT(R,\circ).$
        \item Let $(S,+,\cdot)$ be a ring, $R\triangleleft S$ and $e\in S$ such that $e^2-e\in R.$ Then $(\mathrm{H}(e+R),\cdot)$ is a truss.
    \end{itemize}
    \end{example}

\begin{lemma}\label{lem:isoringcirc}
    Let $R$ and $S$ be rings. Then $\tT(R,\circ)\cong \tT(S,\circ)$ if and only if $R\cong S.$
\end{lemma}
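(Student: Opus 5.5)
The natural tool is the map $\theta\colon R\to R$, $\theta(a)=-a$. The point is that $a\mapsto -a$ turns the circle operation into a twisted ring multiplication. We have
$$\theta(a)\,\theta(b)=ab$$
in the ring, while $a\circ b=a+b+ab$. So instead of $\theta$ we use $\psi(a)=1+a$ when $R$ is unital, since then $(1+a)(1+b)=1+a\circ b$. In general $R$ need not be unital, so the plan avoids that route and works intrinsically inside $\tT(R,\circ)$, via the absorber construction recalled in the preliminaries.

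Let $\varphi\colon\tT(R,\circ)\to\tT(S,\circ)$ be a truss isomorphism. In $\tT(R,\circ)$ the element $0$ is a two-sided identity for $\circ$, since $0\circ a=a$. A truss isomorphism is a semigroup isomorphism, so it maps the identity of $(R,\circ)$ to the identity of $(S,\circ)$; hence $\varphi(0)=0$. Being a heap homomorphism with $\varphi(0)=0$, $\varphi$ preserves $a-0+b=a+b$, so $\varphi$ is additive. Then $\varphi(a\circ b)=\varphi(a)\circ\varphi(b)$ expands to
$$\varphi(a)+\varphi(b)+\varphi(ab)=\varphi(a)+\varphi(b)+\varphi(a)\varphi(b),$$
which gives $\varphi(ab)=\varphi(a)\varphi(b)$. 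Therefore $\varphi$ is a ring isomorphism $R\cong S$.

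For the converse, a ring isomorphism $f\colon R\to S$ is additive, so it preserves $a-b+c$. It also preserves $a+b+ab$, so it is a truss isomorphism $\tT(R,\circ)\to\tT(S,\circ)$.

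There is no real obstacle here. The only point needing care is identifying $0$ intrinsically as the unique identity of the semigroup $(R,\circ)$; this works because identities of semigroups are unique and are preserved by isomorphisms. No unitality of $R$ is needed.
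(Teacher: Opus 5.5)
Your argument is correct and is essentially the paper's proof: a ring isomorphism preserves both $a-b+c$ and $a+b+ab$, and conversely a truss isomorphism must send the $\circ$-identity $0$ to $0$, so it is additive (a heap map fixing $0$), and then $rr'=r\circ r'-r-r'$ gives multiplicativity. You spell out the step the paper compresses into ``$f$ preserves identities which become zeros of rings''; your opening remarks about $\theta$ and $\psi$ are not used and can be dropped.
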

\begin{proof}
    If $f:R\to S$ is an isomorphism of rings, then for all $r,r'\in R$
    $$
f(r\circ r')=f(rr'+r+r')=f(rr')+f(r)+f(r')=f(r)f(r')+f(r)+f(r')=f(r)\circ f(r'),
    $$
and $f:\tT(R,\circ)\to \tT(S,\circ)$ is an isomorphism of trusses. The proof for the converse is analogous, as $rr'=r\circ r'-r-r'$ for all $r,r'\in R$ and $f$ preserves identities which becomes zeros of rings.
\end{proof}

The following lemma can be found in \cite[Proposition 3.12(4)]{ABR}.

\begin{lemma}\label{lem:circmult}
Let $R$ be a ring. Then $\tT(R,\circ)\cong\tT(R)$ if and only if $R$ is unital.
\end{lemma}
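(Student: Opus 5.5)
We need to prove that $\tT(R,\circ)\cong\tT(R)$ if and only if $R$ is unital. The plan is to use the observation recalled in the preliminaries: a truss with an absorber $a$ is the ring $(T,+_a,\cdot)$, and a surjective truss homomorphism carries the absorber to the absorber and induces a ring homomorphism between the associated rings.

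For the \emph{if} direction, assume $R$ has identity $1$. Define $f:R\to R$ by $f(r)=1+r$.
\begin{itemize}
\item $f$ is a bijection that preserves $a-b+c$, since $(1+a)-(1+b)+(1+c)=1+(a-b+c)$. So $f$ is a heap isomorphism $\mathrm{H}(R)\to\mathrm{H}(R)$.
\item $f$ is multiplicative from $\circ$ to the ring product: $f(r\circ s)=1+r+s+rs=(1+r)(1+s)=f(r)f(s)$.
\end{itemize}
Hence $f:\tT(R,\circ)\to\tT(R)$ is a truss isomorphism.

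For the \emph{only if} direction, let $\varphi:\tT(R,\circ)\to\tT(R)$ be a truss isomorphism.
\begin{itemize}
\item In $\tT(R,\circ)$ the element $-1$ is generally not available, since $R$ need not be unital. The element $0$, however, is a two-sided identity for $\circ$: $0\circ r=r=r\circ 0$.
\item A bijective semigroup homomorphism carries an identity to an identity. So $u=\varphi(0)$ satisfies $ur=ru=r$ for all $r$ in the image, which is all of $R$.
\item Therefore $u$ is a multiplicative identity of $R$, and $R$ is unital.
\end{itemize}

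There is no real obstacle here. The only point needing care is to use the identity of $\circ$, namely $0$, rather than trying to transport absorbers, since $\tT(R,\circ)$ need not have an absorber when $R$ is non-unital.
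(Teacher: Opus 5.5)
Your proof is correct and takes the same approach as the paper: a translation map for the ``if'' direction, and for the ``only if'' direction the fact that an isomorphism carries the $\circ$-identity $0$ to an identity of $R$. Your map $r\mapsto 1+r$ sends the absorber $-1$ of $\tT(R,\circ)$ to $0$ and is the correct choice for an arbitrary unital ring; the paper instead writes $x\mapsto[x,1,0]=x-1$ and calls $1$ an absorber, which is valid only when $2=0$ (for example in the Boolean rings where the lemma is actually used).
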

\begin{proof}
    If $R$ is unital, then $1$ is an absorber of $\tT(R,\circ)$ and $$[-,1,0]:\tT(R,\circ)\to \tT(R),\quad x\mapsto [x,1,0]$$ is an isomorphism of rings. If $\tT(R,\circ)\cong\tT(R),$ then since $\tT(R,\circ)$ is unital and isomorphism preserves identity, $R$ is unital. 
\end{proof}

\begin{example}\label{ex:lrz}
Let $(A,[-,-,-])$ be an Abelian heap. 
\begin{enumerate}
    \item A \textbf{left zero truss} $\zZ_\lL (A)$ is a heap $A$ with the left zero band operation $a\cdot_l b=a$ for all $a,b\in A$
    \item A \textbf{right zero truss} $\zZ_\rR(A)$ is a heap $A$ with the right zero band operation $a\cdot_r b=b$ for all $a,b\in A.$
\end{enumerate}
\end{example}

The following lemma is introduced without a proof. The proof is a simple exercise.

\begin{lemma}\label{lem:zerogroup}
    Let $H$ and $K$ be two non-empty Abelian heaps. Then the following are equivalence:
    \begin{enumerate}
        \item $H\cong K,$
        \item $\zZ_\lL(H)\cong \zZ_\lL(K),$
        \item $\zZ_\rR(K)\cong \zZ_\rR(H),$
        \item $(H,+_h)\cong (K,+_k)$ for all $h\in H$ and $k\in K.$
    \end{enumerate}
    Moreover, $\zZ_\lL(H)\cong\zZ_\rR(K)$ if and only if $|H|= |K|=1.$ 
\end{lemma}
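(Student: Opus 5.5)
The plan is to prove the cycle of implications (1)$\Rightarrow$(2)$\Rightarrow$(1), (1)$\Leftrightarrow$(3) and (1)$\Leftrightarrow$(4), and then treat the ``moreover'' clause separately. The observation behind everything is that in $\zZ_\lL(H)$ and $\zZ_\rR(H)$ the multiplication is determined by the underlying set alone. Consequently, \emph{every} bijection $\varphi\colon H\to K$ satisfies
$$
\varphi(a\cdot_l b)=\varphi(a)=\varphi(a)\cdot_l\varphi(b)\quad\text{and}\quad\varphi(a\cdot_r b)=\varphi(b)=\varphi(a)\cdot_r\varphi(b).
$$

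For (1)$\Rightarrow$(2) and (1)$\Rightarrow$(3), a heap isomorphism $\varphi\colon H\to K$ is therefore automatically multiplicative for the left (respectively right) zero operations, so it is a truss isomorphism. For the converses (2)$\Rightarrow$(1) and (3)$\Rightarrow$(1), a truss isomorphism is in particular a bijective heap homomorphism, and its inverse is also a heap homomorphism, so $H\cong K$ as heaps.

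For (1)$\Leftrightarrow$(4), fix $h\in H$ and $k\in K$. Given a heap isomorphism $\varphi$, I first compose it with the translation $\tau\colon x\mapsto[x,\varphi(h),k]$ of $K$. This translation is a heap automorphism, since by the Abelian heap axioms $[[x,y,z],\varphi(h),k]=[[x,\varphi(h),k],[y,\varphi(h),k],[z,\varphi(h),k]]$, and it sends $\varphi(h)$ to $k$. The composite $\psi=\tau\circ\varphi$ sends $h$ to $k$, and
$$
\psi(a+_hb)=\psi([a,h,b])=[\psi(a),k,\psi(b)]=\psi(a)+_k\psi(b),
$$
so $\psi$ is a group isomorphism $(H,+_h)\cong(K,+_k)$. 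Conversely, any group isomorphism $f\colon(H,+_h)\to(K,+_k)$ is a heap isomorphism, because in the retract $[a,b,c]=a-_hb+_hc$, and additive maps preserve the expression $a-b+c$. The word ``for all'' in (4) is covered by the remark that retracts at different points are isomorphic.

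For the ``moreover'' clause, the case $|H|=|K|=1$ is trivial, since both trusses are one-point trusses. Conversely, suppose $\varphi\colon\zZ_\lL(H)\to\zZ_\rR(K)$ is an isomorphism. Then for all $a,b\in H$,
$$
\varphi(a)=\varphi(a\cdot_l b)=\varphi(a)\cdot_r\varphi(b)=\varphi(b).
$$
By injectivity $a=b$, so $|H|=1$, and by bijectivity $|K|=1$ as well.

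I expect the only step needing any care to be the translation argument in (1)$\Rightarrow$(4), namely checking that $x\mapsto[x,u,v]$ is a heap automorphism. This follows directly from associativity and commutativity of the heap operation, so the whole proof is routine.
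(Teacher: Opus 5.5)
Your proof is correct and complete. The paper gives no argument to compare against: it states the lemma ``without a proof'' and calls it a simple exercise. Your verification is the routine check that was intended. It has three ingredients: every bijection respects the left and right zero multiplications, the translation $x\mapsto[x,\varphi(h),k]$ moves the base point, and any isomorphism $\zZ_\lL(H)\to\zZ_\rR(K)$ is forced to be constant.

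Two minor points. The translation argument already works for arbitrary $h\in H$ and $k\in K$, so the appeal to the isomorphism of retracts at different points is redundant. For (4)$\Rightarrow$(1), what you actually need is that $H$ and $K$ are non-empty, so that at least one pair $(h,k)$ exists.
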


In \cite{ABR} the authors have shown that with every ring one can associate multiple trusses using maps called double homothetisms. For our purposes, we present a weaker version of \cite[Theorem 3.6 (3)]{ABR}. See also \cite[Definition 3.7 (3)]{ABR}.

\begin{lemma}\label{lem:hom}
    Let $R$ be a commutative ring, and $\sigma\in \mathrm{End}_R(R)$ be an $R$-module endomorphism such that $\sigma^2=\sigma.$ Then $\hH(R^{+})$ together with multiplication
    $$
a\diamond b=ab+\sigma(a)+\sigma(b)
    $$
    is a truss, which we will denote by $T(\sigma,0).$
\end{lemma}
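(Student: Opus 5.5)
We have to show that $\hH(R^{+})$ equipped with $a\diamond b=ab+\sigma(a)+\sigma(b)$ is a truss. The heap axioms hold because $\hH(R^+)$ is the heap of the abelian group $(R,+)$. What remains is associativity of $\diamond$ and distributivity of $\diamond$ over the ternary operation $[a,b,c]=a-b+c$ on both sides. Since $R$ is commutative, $\diamond$ is commutative, so only left distributivity needs checking.

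\emph{Distributivity.} Expand both sides:
$$a\diamond[b,c,d]=a(b-c+d)+\sigma(a)+\sigma(b)-\sigma(c)+\sigma(d).$$
This uses only additivity of $\sigma$. On the other side,
$$[a\diamond b,a\diamond c,a\diamond d]=(ab+\sigma(a)+\sigma(b))-(ac+\sigma(a)+\sigma(c))+(ad+\sigma(a)+\sigma(d)).$$
The three copies of $\sigma(a)$ reduce to a single $\sigma(a)$, so the two expressions agree. This step only needs $\sigma$ to be a group endomorphism of $R^+$.

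\emph{Associativity.} This is the main step, and it is where $R$-linearity and $\sigma^2=\sigma$ are used. Write $s=\sigma(1)$; this makes sense even if $R$ is non-unital, by applying the argument below directly. Expand
$$(a\diamond b)\diamond c=(ab+\sigma(a)+\sigma(b))c+\sigma(ab+\sigma(a)+\sigma(b))+\sigma(c).$$
$R$-linearity gives $\sigma(xy)=x\sigma(y)=\sigma(x)y$. Idempotence gives $\sigma(\sigma(a))=\sigma(a)$. Hence
$$(a\diamond b)\diamond c=abc+\sigma(a)c+\sigma(b)c+a\sigma(b)+\sigma(a)+\sigma(b)+\sigma(c),$$
where we wrote $\sigma(ab)=a\sigma(b)$.

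The same expansion for $a\diamond(b\diamond c)$ gives
$$abc+a\sigma(b)+a\sigma(c)+\sigma(b)c+\sigma(a)+\sigma(b)+\sigma(c),$$
after writing $\sigma(bc)=\sigma(b)c$.

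The difference of the two expressions is $\sigma(a)c-a\sigma(c)$. This vanishes because $\sigma(a)c=\sigma(ac)=a\sigma(c)$, using linearity and commutativity. So $\diamond$ is associative.

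The only delicate point is consistent use of linearity, $\sigma(xy)=x\sigma(y)$, which holds for $R$-module endomorphisms of $R$ over a commutative ring. Apart from that, idempotence of $\sigma$ is exactly what absorbs the $\sigma(\sigma(\cdot))$ terms.
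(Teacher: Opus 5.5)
Your proof is correct. Distributivity needs only additivity of $\sigma$, and commutativity of $\diamond$ then gives the right-hand law. In the associativity check, the mixed terms cancel via $\sigma(xy)=x\sigma(y)=\sigma(x)y$, and idempotence absorbs the $\sigma^2$ terms.

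The paper gives no proof of its own. It quotes the lemma as a special case of the double-homothetism construction of \cite[Theorem 3.6(3)]{ABR}: commutative $R$, the same map on both sides, and no constant term. In that framework such trusses are realised as cosets $e+R$ of an ideal in an extension ring, with $e$ an idempotent acting on $R$ by $\sigma$ from both sides. The truss axioms are then inherited from the ambient ring, as in Example~\ref{ex:jacobtruss}.

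Your direct computation is self-contained and shows exactly where each hypothesis enters. It also shows that, given $R$-linearity, idempotence is necessary. Without it the associator $(a\diamond b)\diamond c-a\diamond(b\diamond c)$ equals $(\sigma^2-\sigma)(a)-(\sigma^2-\sigma)(c)$, and taking $c=0$ forces $\sigma^2=\sigma$.

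The citation instead places $T(\sigma,0)$ inside the general family of trusses from \cite{ABR}, together with its isomorphism results. An example is the map $x\mapsto x+a+b$ from $T(\sigma,0)$ to $T(\sigma-l_{a+b},0)$ used in the proof of Theorem~\ref{thm:class1}. Verifying that the extension ring is associative costs essentially the computation you did.

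One small correction: delete the sentence introducing $s=\sigma(1)$. It is never used, and $\sigma(1)$ is meaningless for non-unital $R$. That is exactly the situation (e.g.\ $P=\bigoplus_{i\in\NN}F_i$) in which the paper applies the lemma. Your argument never uses a unit, so it covers that case as written.
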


Given a truss, one can always obtain a ring by using \cite[Theorem 4.3]{ABR}, or by employing a weaker version that is more suitable for our purposes:

\begin{lemma}\label{lem:const:bullet}
    Let $(T,[-,-,-],\cdot)$ be a truss and $e\in T.$ Then $\rR(T,e):=(T,+_e,\bullet_e)$ is a ring, where 
    $$
a\bullet_e b:=[[a\cdot b,a\cdot e,e],e\cdot b,e^2]
    $$
    for all $a,b\in T.$
\end{lemma}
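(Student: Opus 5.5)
The plan is to verify directly that $\rR(T,e)$ satisfies the ring axioms, using distributivity of $\cdot$ over the heap operation. We already know that $(T,+_e)$ is an abelian group with zero $e$, since it is the retract of the Abelian heap $T$ at $e$. Thus it remains to show two things: that $\bullet_e$ is biadditive with respect to $+_e$, and that $\bullet_e$ is associative.

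The key device is to work in the retract group $(T,+_e)$ and to rewrite the truss multiplication in it. We have $[x,y,z]=x-_ey+_ez$. Because left multiplication by $a$ is a heap endomorphism, the map $\lambda_a(x)=ax$ is affine on $(T,+_e)$. Explicitly,
\[
ax = \big(ax -_e ae\big) +_e ae,
\]
where $x\mapsto ax-_e ae$ is a group endomorphism of $(T,+_e)$. Indeed, for any heap morphism $f$, the map $x\mapsto [f(x),f(e),e]$ preserves $+_e$. The same holds for right multiplication. In this notation the definition reads
\[
a\bullet_e b = ab -_e ae -_e eb +_e ee.
\]
This is the familiar "corrected product", analogous to $(a-e)(b-e)$. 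To show additivity in $b$, fix $a$. Then $b\mapsto ab-_e eb$ is the difference of the two affine maps $\lambda_a$ and $\lambda_e$. Its linear part is the difference of their linear parts, and its value at $b=e$ is $ae-_e ee$. Hence $b\mapsto a\bullet_e b$ is affine, takes the value $e$ (the zero) at $b=e$, and is therefore additive. Additivity in $a$ follows by the symmetric argument.

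Associativity is the main computational step. For it I would expand both $(a\bullet_e b)\bullet_e c$ and $a\bullet_e(b\bullet_e c)$ into signed $+_e$-sums of triple products. We have $(a\bullet_e b)c = abc - aec - ebc + eec$, obtained by applying right multiplication by $c$, which is a heap morphism and hence preserves alternating sums with an odd number of terms; the four-term sum $ab-ae-eb+ee$ is written as the heap expression $[[ab,ae,e],eb,ee]$ together with $e$ appropriately. Similarly $(a\bullet_e b)e = abe-aee-ebe+eee$. Since $e\bullet_e$-terms then combine, both sides should reduce to
\[
abc - abe - aec - ebc + aee + ebe + eec - eee,
\]
which is symmetric, and so the two sides agree. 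The obstacle is bookkeeping: a heap morphism preserves $x-y+z$ but not arbitrary $+_e$-sums. To handle this, I would always group terms into alternating sums of odd length, i.e.\ iterated brackets $[[\cdot,\cdot,\cdot],\cdot,\cdot]$, and insert $e$ as the extra term when needed, using $f(e)$ cancellations. Alternatively, one can cite \cite[Theorem 4.3]{ABR}, of which this is the special case with the trivial double homothetism. Once associativity is established, the lemma follows.
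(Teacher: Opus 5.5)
Your approach is correct in substance. It differs from the paper only in that the paper gives no argument here: it presents the lemma as a weaker form of \cite[Theorem 4.3]{ABR} and cites it. That citation also supplies $\rR(T,e)\cong\rR(T,f)$, which the paper uses in the next remark. Your verification (affine decomposition of left and right multiplications in the retract, then biadditivity and associativity) is self-contained and elementary. By itself, though, it gives only the ring axioms and not the independence of $e$.

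One correction is needed in the associativity step: your two displayed expansions drop the images of the inserted $e$. Since $a\bullet_e b=[[ab,ae,e],eb,ee]$, right multiplication gives $(a\bullet_e b)c=abc-_e aec+_e ec-_e ebc+_e eec$ and $(a\bullet_e b)e=abe-_e aee+_e ee-_e ebe+_e eee$. The extra $+_e ec$ from the first and, after subtraction, $-_e ee$ from the second are exactly what cancel the $-_e ec+_e ee$ contributed by the outer $\bullet_e$. On the other side, the extra $+_e ae$ in $a(b\bullet_e c)$ and $+_e ee$ in $e(b\bullet_e c)$ similarly absorb $-_e ae+_e ee$. Read literally, your formulas and their analogues leave $-_e ec+_e ee$ on one side and $-_e ae+_e ee$ on the other, so the sides would not agree. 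With the correction, both reduce to your eight-term symmetric expression.

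You can also bypass this bookkeeping by pushing your affine viewpoint one step further. Let $\phi_x(z)=[xz,xe,e]$ be the linear part of $z\mapsto xz$. One checks $\phi_{xy}=\phi_x\phi_y$, and your formula reads $a\bullet_e b=\Phi(a)(b)$ with $\Phi(a)=\phi_a-\phi_e\in\mathrm{End}(T,+_e)$. Additivity in the first variable says $\Phi$ is additive, so $\Phi(a\bullet_e b)=\phi_{ab}-\phi_{ae}-\phi_{eb}+\phi_{ee}=\Phi(a)\Phi(b)$. Evaluating at $c$ is exactly associativity.
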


In fact, by the \cite[Theorem 4.3]{ABR}, the choice of $e\in T$ in the previous lemma is arbitrary.

\begin{remark}
Let $(T,[-,-,-],\cdot)$ be a truss, then $\rR(T,e)\cong \rR(T,f)$ for all $e,f\in T.$ Therefore, the notation $\rR(T):=\rR(T,e)\cong\rR(T,f)$ will be used.
\end{remark}

\begin{remark}\label{rem:iso}
    If $T$ and $S$ are isomorphic trusses, then $\rR(T)$ and $\rR(S)$ are isomorphic rings.
\end{remark}

\begin{lemma}\label{lem:zero=zero}
Let $(T,[-,-,-],\cdot)$ be a left zero truss or a right zero truss. Then $\rR(T)$ is a zero multiplication ring.
\end{lemma}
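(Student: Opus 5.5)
Since \cref{lem:const:bullet} lets us choose any $e\in T$ and the paper notes that the resulting ring does not depend on this choice, we fix an arbitrary $e\in T$ and compute the product $a\bullet_e b$ directly from the definition. The whole argument then comes down to using the heap axioms to simplify the nested brackets.

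First let $T$ be a left zero truss, so $xy=x$ for all $x,y$. Then $a\cdot b=a$, $a\cdot e=a$, $e\cdot b=e$ and $e^2=e$. Hence
$$
a\bullet_e b=[[a,a,e],e,e].
$$
By the Mal'cev identities \eqref{eq:mal}, $[a,a,e]=e$ and $[e,e,e]=e$, so $a\bullet_e b=e$. The element $e$ is the neutral element of $+_e$, since $x+_e e=[x,e,e]=x$. Therefore every product in $\rR(T,e)$ equals zero.

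The right zero case is similar. Now $a\cdot b=b$, $a\cdot e=e$, $e\cdot b=b$ and $e^2=e$, so
$$
a\bullet_e b=[[b,e,e],b,e]=[b,b,e]=e,
$$
again by \eqref{eq:mal}. Once more every product is the zero $e$.

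Thus $\rR(T,e)$ is a zero multiplication ring, and by the remark following \cref{lem:const:bullet} the same holds for $\rR(T)$. No step is a real obstacle; the only point needing care is to check that the heap neutral element $e$ is indeed the zero of the ring $(T,+_e)$, which follows directly from $[x,e,e]=x$.
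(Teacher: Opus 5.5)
Your proof is correct and follows the paper's argument: you compute $a\bullet_e b$ directly and reduce it to $e$, the zero of $(T,+_e)$, using the Mal'cev identities. You also write out the right zero case, $[[b,e,e],b,e]=[b,b,e]=e$, which the paper only calls analogous.
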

\begin{proof}
    Let $(T,[-,-,-],\cdot)$ be a left zero truss and $e\in T.$ Then for all $a,b\in T,$
    $$
a\bullet_e b=[[a\cdot b, a\cdot e,e],e\cdot b,e^2]=[[a,a,e],e,e]=e.
    $$
The third equality follows by \cref{eq:mal}. Therefore, $\rR(T)$ is a zero multiplication ring. The check for a right zero truss is analogous.
\end{proof}

\section{Main Part}
Truss is a generalisation of a ring. There are Boolean rings; is there a ``non-ring" truss in which every element is an idempotent? The answer is yes and they can be noncommutative, see Example \ref{ex:lrz}. Knowing that, during one of the Bia\l ystok University Algebra Seminars second author  stated the following problem:
\begin{quote}
    Describe all trusses $T$ such that $a^2=a$ for all $a\in T.$
\end{quote}

Let us now introduce the main structure of interest:

\begin{definition}
    A \textbf{Boolean truss} is a truss $T$ such that $a^2=a$ for all $a\in T.$
\end{definition}

By Theorem \ref{thm:cliff-McLean} and since every Boolean truss has the underlying structure of a band, the rectangularity will play a crucial role in future considerations. Therefore let us introduce analogous definition of rectangularity for trusses:

\begin{definition}
    A truss $T$ is called {\bf rectangular} if $aba=a$ for all $a,b\in T.$
\end{definition}

\begin{example}
\leavevmode
    \begin{itemize}
        \item Every left zero truss and right zero truss is Boolean and rectangular.
        \item If $R\not =\{0\}$ is a Boolean ring, then $\mathrm{T}(R)$ is a Boolean truss, but it is not rectangular.
        \item If $R\not =\{0\}$ is a Boolean ring, then $\tT(R,\circ)$ is a Boolean truss, but it is not rectangular.
        \item A direct product of Boolean (rectangular) trusses is a Boolean (rectangular) truss.
    \end{itemize}
\end{example}

For the convenience of the reader, we also add an explicit example of a rectangular truss:

\begin{example}\label{ex:diff}
    Consider the Abelian heap $\hH(\mathbb{Z}/30\mathbb{Z})$ acquired from the additive group $(\mathbb{Z}/30\mathbb{Z},+)$ together with multiplication given by
    $$
a\cdot b=6b-5a,
    $$
    for all $a,b\in \mathbb{Z}/30\mathbb{Z}.$ Then $(\mathbb{Z}/30\mathbb{Z},\ll-,-,-\rr,\cdot)$ is a rectangular truss. 
\end{example}

\begin{lemma}
    Let $T$ be a truss and $a\in T.$ Then $C_T(a):=\{x\in T\ |\ xa=ax\}$ is a subtruss of $T.$
\end{lemma}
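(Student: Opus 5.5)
We have to check two things: that $C_T(a)$ is closed under the heap operation, so that it is a subheap, and that it is closed under multiplication, so that it is a subsemigroup. The heap axioms and the distributive laws are identities, so they hold automatically on any subset closed under both operations; such a subset is therefore a subtruss. One caveat concerns non-emptiness. It is harmless here, since $a\in C_T(a)$ trivially, so $C_T(a)$ is never empty. (If the paper allowed empty heaps, this step would not even be needed.)

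For closure under the heap operation, we take $x,y,z\in C_T(a)$ and use the two distributive laws of a truss. This gives
$$
[x,y,z]\cdot a=[xa,ya,za]=[ax,ay,az]=a\cdot[x,y,z].
$$
The first equality is right distributivity, the middle one uses $xa=ax$, $ya=ay$ and $za=az$, and the last one is left distributivity. Hence $[x,y,z]\in C_T(a)$.

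For closure under multiplication, we take $x,y\in C_T(a)$ and use associativity only:
$$
(xy)a=x(ya)=x(ay)=(xa)y=(ax)y=a(xy),
$$
so $xy\in C_T(a)$.

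There is no real obstacle here. The only point requiring a moment's care is the observation that closure under both operations suffices, because every truss axiom is an equational identity inherited by substructures. So the proof is just the two displayed computations together with the remark that $a\in C_T(a)$.
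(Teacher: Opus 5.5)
Your proof is correct and follows the paper's argument: the paper also shows closure under the heap operation with the two distributive laws, and closure under multiplication with associativity ($bca=bac=abc$). Your extra remark that $a\in C_T(a)$, so $C_T(a)$ is non-empty, is a small point the paper leaves implicit.
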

\begin{proof}
    For any $b,c,d\in C_T(a),$
    $$[b,c,d]a=[ba,ca,da]=[ab,ac,ad]=a[b,c,d].$$
    Therefore, $C_T(a)$ is a subheap. Furthermore, $bca=bac=abc,$ and $C_T(a)$ is a subtruss.
\end{proof}

\begin{proposition}\label{prop:rectbool}
Let $T$ be a rectangular truss. Then $T$ is Boolean and $C_T(a)=\{a\}$ for all $a\in T.$
\end{proposition}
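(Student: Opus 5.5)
We must show that $aa=a$ for all $a\in T$ and that $xa=ax$ forces $x=a$. Both parts come from combining $aba=a$ with the truss distributivity and the heap axioms. The Boolean part is genuinely a truss fact: a rectangular semigroup $aba=a$ need not be idempotent in general. So the plan is first to prove idempotency, then to derive the centraliser statement from it.

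\textbf{Idempotency.} Fix $a\in T$ and write $e=a^2$.
\begin{itemize}
\item Taking $b=a$ in $aba=a$ gives $a^3=a$, so $ea=ae=a$ and $e^2=a^4=a^2=e$.
\item Now compute $a[a,e,a]a$ in two ways. By rectangularity with $b=[a,e,a]$ it equals $a$.
\item By distributivity in both variables it equals $[a^3,aea,a^3]=[a,aea,a]$, and $aea=(ae)a=a^2=e$.
\item Hence $[a,e,a]=a$.
\end{itemize}
In the retract group $(T,+_a)$ we have $[a,e,a]=-_a e$. So $-_a e=a$, the neutral element, and therefore $e=a$, that is, $a^2=a$. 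Concretely, $[a,e,a]=a$ gives $e=[a,[a,e,a],a]=[a,a,a]=a$ using the heap axioms.

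If this particular element does not work out, the fallback is to test $aba=a$ on elements of the form $b=[x,y,z]$ built from $a$ and $a^2$ until a usable identity appears. I expect this step, finding the right heap element, to be the main obstacle; the rest is routine.

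\textbf{Centraliser.} Clearly $a\in C_T(a)$. Suppose $xa=ax$.
\begin{itemize}
\item By rectangularity, $a=axa=a(ax)=a^2x=ax$, using idempotency.
\item Symmetrically, $x=xax=(xa)x=x(ax)$ would be the natural route; instead use $x=xax$ with $ax=xa$.
\item This gives $x=x(xa)=x^2a=xa$.
\end{itemize}
Thus $x=xa=ax=a$, so $C_T(a)=\{a\}$.
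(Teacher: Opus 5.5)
Your proof is correct. The centraliser part is the same as the paper's ($a=axa=a^2x=ax$ and $x=xax=x^2a=xa=ax$). For idempotency you take a heap-based route, whereas the paper's argument is purely multiplicative. With $b=a^4$ it gets $a=a\,a^4a=a^6=a^3a^3=a\cdot a=a^2$, using only $a^3=a$.

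This shows that your remark that a semigroup with $aba=a$ ``need not be idempotent'' is false. Every semigroup satisfying $aba=a$ identically is a band, so the Boolean part is not a truss phenomenon at all.

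Your own computation already contains the short proof. You showed $aea=(ae)a=a\cdot a=a^2$ for $e=a^2$, while rectangularity with $b=e$ gives $aea=a$ directly. Hence $a=a^2$ with no heap operations. The detour through $a[a,e,a]a$ and the retract $(T,+_a)$ is valid but carries no extra information. If you evaluate $aea$ there by rectangularity instead, the identity $[a,aea,a]=a$ collapses to the trivial $[a,a,a]=a$.

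The paper's route buys brevity and makes clear that the result holds for arbitrary semigroups. There is no ``main obstacle'' of finding the right heap element.
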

\begin{proof}
Let $b=a^4$ for any $a\in T,$ then 
$$
a=aba=aa^4a=a^3a^3=a^2,
$$
and $T$ is Boolean. Now, for any $x\in C_T(a),$
$$
a=axa=a^2x=ax\quad \&\quad x=xax=ax.
$$
Therefore, $x=a$ for all $x\in C_T(a),$ and $C_T(a)=\{a\}.$
\end{proof}

The following theorem is a consequence of \cite{ABR} and allows us to interpret a truss as a congruence class of a ring, closed under multiplication of the ring.

\begin{theorem}\cite[Theorem 3.6 and Section 5]{ABR}\label{thm:ABR}
    Every Truss $T$ is isomorphic with a truss of the form $(e+R,[ -,-,-],\cdot),$ where $(S,+,\cdot)$ is a ring, $R\triangleleft S,$ 
    $S/R\cong \mathbb{Z},$ $e\in S$, $e^2-e\in R$ and $[ a,b,c]=a-b+c$ for all $a,b,c\in S$. 
\end{theorem}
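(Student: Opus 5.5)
Theorem \ref{thm:ABR} is a structural result quoted from \cite{ABR}, so I would reconstruct it by building the enveloping ring explicitly rather than deriving it from anything earlier in the paper. Given a truss $(T,[-,-,-],\cdot)$, fix $e\in T$ and put $R:=\rR(T,e)=(T,+_e,\bullet_e)$, which is a ring by Lemma \ref{lem:const:bullet}. Let $S:=R\times\mathbb{Z}$ with the additive structure of the direct sum. Then $R\times\{0\}$ will be the ideal, $(0,1)$ will represent $e$, and $T$ will be identified with the coset $\{(r,1): r\in R\}$.

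The multiplication on $S$ is defined so that the element $(0,1)$ acts on $R$ through the maps $\lambda(r):=e\cdot r-_e e\cdot e$ and $\rho(r):=r\cdot e-_e e\cdot e$, computed in $(T,+_e)$. The truss distributivity makes $\lambda$ and $\rho$ additive endomorphisms of $(R,+_e)$. Writing $q:=e\cdot e\in R$, I would set
$$
(r,m)(s,n):=\bigl(r\bullet_e s+m\lambda(s)+n\rho(r)+mn\,q,\ mn\bigr).
$$
I would then verify the following facts.
\begin{enumerate}
\item $\lambda$ is a left multiplier and $\rho$ a right multiplier of $R$, and together they form a double homothetism: $\lambda(r\bullet_e s)=\lambda(r)\bullet_e s$, $\rho(r\bullet_e s)=r\bullet_e\rho(s)$ and $r\bullet_e\lambda(s)=\rho(r)\bullet_e s$.
\item The mixed compatibilities hold: $\lambda\rho=\rho\lambda$, $\lambda(q)=\rho(q)$, and $\lambda^2(r)=\lambda(r)\bullet_e\ldots$, more precisely $\lambda^2 = q\bullet_e(-)+\lambda$ up to the correct normalisation, with the mirror identity for $\rho$.
\end{enumerate}
Each identity should reduce, after expanding $+_e$ and $\bullet_e$ through the heap axioms, to associativity of $\cdot$ together with distributivity over $[-,-,-]$. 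This bookkeeping is the main obstacle, and it is where I expect most of the effort. To keep it manageable, I would first translate everything into the retract group $(T,+_e)$, so that $[a,b,c]=a-b+c$ holds there and the expansions become ordinary linear algebra.

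Granting these identities, associativity of the multiplication on $S$ follows by comparing coefficients of $m,n,mn$. Biadditivity is clear from the formula. So $S$ is a ring, $R\times\{0\}$ is an ideal, and $S/(R\times\{0\})\cong\mathbb{Z}$. The element $\tilde e=(0,1)$ satisfies $\tilde e^2-\tilde e=(q,0)\in R$.

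Finally, define $\varphi:T\to \tilde e+R$ by $\varphi(t)=(t,1)$, using that the zero of $(R,+_e)$ is $e$. This map is bijective. It is a heap map because $(a,1)-(b,1)+(c,1)=(a-_eb+_ec,1)=([a,b,c],1)$. For multiplicativity, compute
$$
(a,1)(b,1)=\bigl(a\bullet_e b+\lambda(b)+\rho(a)+q,\ 1\bigr).
$$
Unwinding the definition of $\bullet_e$ in Lemma \ref{lem:const:bullet}, the first coordinate is exactly $a\cdot b$. This is precisely why $\lambda$, $\rho$ and $q$ were chosen as above: they cancel the correction terms $a\cdot e$, $e\cdot b$ and $e^2$ in $\bullet_e$. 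Hence $\varphi$ is an isomorphism of trusses, which proves the theorem.
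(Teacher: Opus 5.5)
Your proposal is correct. The paper gives no proof of its own and only quotes \cite[Theorem 3.6 and Section 5]{ABR}, and your construction is essentially the argument of that source: the retract ring $\rR(T,e)$, the double homothetism $(\lambda,\rho)$ with $q=e\cdot e$, and the twisted extension $R\times\mathbb{Z}$ in which $T$ is the coset of $(0,1)$. The seven identities you list are exactly what associativity of $S$ requires once you compare coefficients of all monomials in $m,n,p$ (not only $m,n,mn$), and each follows directly from associativity of $\cdot$ and bi-affinity; in particular $\lambda^2=\lambda+q\bullet_e(-)$ holds exactly as stated, with no extra normalisation.
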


Now, we are ready to state and prove the main theorem.

\begin{theorem}\label{thm:main}
    A truss $T$ is Boolean if and only if it is isomorphic to a truss $$T_1\times T_2\times T_3\times T_4,$$
    where 
    \begin{enumerate}
        \item $T_1=\tT(A,\circ)$ is a circle truss of a Boolean ring $A,$
        \item $T_2=\mathrm{T}(B)$ for a Boolean ring $B$,
        \item $T_3$ is a left zero truss,
        \item $T_4$ is a right zero truss.
    \end{enumerate}
\end{theorem}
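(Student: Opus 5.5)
The converse direction is immediate: each of $\tT(A,\circ)$, $\tT(B)$ (for Boolean rings $A,B$), a left zero truss and a right zero truss is Boolean, and a direct product of Boolean trusses is Boolean. For the forward direction I would not use \cref{thm:ABR}. Instead I would linearise $T$ at a fixed element $e\in T$.

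\textbf{Linearisation.} Work in the group $(T,+_e)$, whose zero is $e$, and in the ring $\rR(T,e)$ of \cref{lem:const:bullet}. Since $e^2=e$, unwinding the definition of $\bullet_e$ gives
$$ab=a\bullet_e b+\alpha(a)+\beta(b)$$
with sums taken in $+_e$. Here $\alpha(x)=xe$ and $\beta(x)=ex$. Distributivity of the truss multiplication over the heap operation, together with $ee=e$, makes $\alpha,\beta$ group endomorphisms of $(T,+_e)$. Moreover $\alpha^2=\alpha$ (as $xee=xe$), $\beta^2=\beta$, and $\alpha\beta=\beta\alpha$ (both are $x\mapsto exe$).

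\textbf{Identities.} I would then extract identities from associativity and idempotency.
\begin{itemize}
\item Idempotency gives $x=x\bullet_e x+\alpha(x)+\beta(x)$. Comparing this for $x$ and for $2x$ (the latter uses bilinearity of $\bullet_e$ and additivity of $\alpha,\beta$) yields $2(x\bullet_e x)=0$. Polarising gives further constraints.
\item Associativity $(ab)c=a(bc)$, expanded through the displayed formula, gives relations between $\bullet_e$, $\alpha$ and $\beta$. Among them, $\alpha$ and $\beta$ should be compatible with $\bullet_e$, and $\alpha(x)\bullet_e y$ and $x\bullet_e\beta(y)$ should be controlled.
\end{itemize}

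\textbf{Splitting.} Using the commuting idempotents $\alpha,\beta$, decompose
$$(T,+_e)=G_{11}\oplus G_{10}\oplus G_{01}\oplus G_{00},$$
where $G_{11}=\im\alpha\beta$, $G_{10}=\im\alpha(1-\beta)$, $G_{01}=\im(1-\alpha)\beta$ and $G_{00}=\im(1-\alpha)(1-\beta)$. The goal is to show two things. First, $\bullet_e$ vanishes between different summands. Second, on the individual summands the following hold.
\begin{itemize}
\item On $G_{00}$ we have $ab=a\bullet_e b$, and idempotency makes this a Boolean ring $B$, so the piece is $\tT(B)$.
\item On $G_{11}$ we have $\alpha=\beta=\mathrm{id}$, so $ab=a\bullet_e b+a+b$. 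This is a Boolean ring $A$ with its circle operation, giving $\tT(A,\circ)$.
\item On $G_{10}$ we have $ab=a\bullet_e b+a$, and I expect to prove $a\bullet_e b=0$ there, giving a left zero truss. Dually, $G_{01}$ gives a right zero truss.
\end{itemize}
Once the multiplication is shown to be componentwise, the group isomorphism $(T,+_e)\cong\bigoplus G_{ij}$ becomes an isomorphism of heaps and of semigroups, hence of trusses, with $\prod G_{ij}$.

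The main obstacle is the bookkeeping in the identities step. Specifically, I must show from associativity and Booleanity that $\bullet_e$ kills the cross terms between summands and vanishes on $G_{10}$ and $G_{01}$. I would first try plugging $a=\alpha(x)$, $b=\beta(y)$, and similar choices, into $a=a^2$ and into $aba$-type products. If direct computation stalls, a fallback is the Clifford--McLean viewpoint (\cref{thm:cliff-McLean}): first split off the rectangular part using \cref{lem:rect} and \cref{prop:rectbool}, and then treat the commutative remainder as a Boolean ring.
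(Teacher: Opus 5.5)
Your linearisation is the paper's proof done intrinsically. Under the identification $T\cong e+R$ of \cref{thm:ABR}, with $e+x\leftrightarrow x$, the structure $(T,+_e,\bullet_e)$ is the ring $R$, and $\alpha,\beta$ are right and left multiplication by $e$. Your $G_{11},G_{10},G_{01},G_{00}$ are exactly the paper's Peirce pieces $eRe$, $(1-e)Re$, $eR(1-e)$, $(1-e)R(1-e)$.

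As written, the decisive step (vanishing of cross terms) is only announced, but it does close, and quickly:
\begin{enumerate}
\item Compare $(ab)e=a(be)$, $e(ab)=(ea)b$ and $(ae)b=a(eb)$ through your formula. This gives $\alpha(a\bullet_e b)=a\bullet_e\alpha(b)$, $\beta(a\bullet_e b)=\beta(a)\bullet_e b$ and $\alpha(a)\bullet_e b=a\bullet_e\beta(b)$.
\item Write $x\in G_{ij}$ for $\alpha(x)=ix$, $\beta(x)=jx$. For $x\in G_{ij}$ and $y\in G_{kl}$, step 1 gives $x\bullet_e y\in G_{kj}$, and $x\bullet_e y=0$ whenever $i\neq l$. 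This already yields $G_{10}\bullet_e G_{10}=G_{01}\bullet_e G_{01}=0$.
\item Polarised idempotency gives $x\bullet_e y=-y\bullet_e x$, with $y\bullet_e x\in G_{il}$. Since $G_{kj}=G_{il}$ only when $(i,j)=(k,l)$, every cross product between distinct summands vanishes.
\item On $G_{11}$, $x\bullet_e x=-x$ together with your $2(x\bullet_e x)=0$ gives a Boolean ring.
\end{enumerate}

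The paper instead embeds $T$ via \cref{thm:ABR}, proves $R$ commutative through $2x^2=0$, $e(2x)e=0$ and $2xy=0$, and uses commutativity to see that the Peirce pieces are mutually annihilating ideals. Your route needs only \cref{lem:const:bullet} and replaces that commutativity detour with the grading argument above; commutativity of $R$ then falls out at the end. It is more self-contained and arguably more transparent.

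The Clifford--McLean fallback is unnecessary, and would not suffice on its own. A semilattice of rectangular bands need not be a direct product of a semilattice and a rectangular band, so the heap structure has to be used anyway, which is exactly what your main route does.
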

\begin{proof}
It is a direct check that every element of $T_1\times T_2\times T_3\times T_4$ is an idempotent, and therefore $T$ is a Boolean truss.

Now, assume that $T$ is Boolean, then by the Theorem \ref{thm:ABR} there exist rings $S,R$ and $e\in S$ such that $R\triangleleft S$ and $T$ is isomorphic to $(e+R,[ -,-,-],\cdot).$

Since $e=e+0\in e+R,$ $e^2=e$ and $e$ is an idempotent of a ring $S.$ Moreover, for all $x\in R$ $(e+x)^2=e+x,$ thus
\begin{equation}\label{eq:sq}
ex+xe+x^2=x\quad  \text{for\  all }\quad x\in R.
\end{equation}
By substituting $x$ with $x+y$ in \cref{eq:sq} we get that
\begin{equation}\label{eq:sq3}
xy+yx=0\quad  \text{for\  all }\quad x,y\in R.
\end{equation}

Therefore $2x^2=0$ for all $x\in R,$ and together with \cref{eq:sq} we have that

\begin{equation}\label{eq:sq2}
e(2x)+(2x)e=2x\quad  \text{for\  all }\quad x\in R.
\end{equation}

Now, we multiply \cref{eq:sq2} by $e$ from right, and acquire:

\begin{equation}
e(2x)e=0\quad   \text{for\ all}\quad x\in R.
\end{equation}

For any $x,y\in R$ we have that $$2xy=e(2x)y+(2x)ey=e(2x)(ey+ye+y^2)+(2x)e(ey+ye+y^2)=-2(ey)(xe)=0$$ by \cref{eq:sq}, \cref{eq:sq2} and the fact that $R\triangleleft S.$ Thus for every $x,y\in R,$
$$
2xy=0.
 $$
 Therefore, $xy=-xy$ for any $x,y\in R,$ and since \cref{eq:sq3}, we get that $R$ is a commutative ring, i.e.
 $$
xy=yx\quad \text{for\  all}\quad x,y\in R.
 $$
 Let us Denote by $D(S)$ a Dorroh extension of $S.$ Then $R\triangleleft D(S)$ and $1\in D(S),$ then additive group $R^+$ of $R$ has the following Pierce decomposition to direct sum:
 $$
R^+=eRe\oplus (1-e)R(1-e)\oplus(1-e)Re\oplus eR(1-e).
 $$
By the commutativity of $R,$ we get $eRe\triangleleft R$ and $(1-e)R(1-e)\triangleleft R.$ Moreover, for all $x,y\in R$ we have  $(1-e)xey=(1-e)eyx=0\cdot yx=0$ and $yex(1-e)=xye(1-e)=xy\cdot 0=0,$ thus $[eR(1-e)+(1-e)Re]R=\{0\},$ so $eR(1-e)\triangleleft R$ and $(1-e)Re\triangleleft R.$ Therefore the direct sum of subgroups is, in fact, a direct sum of ideals and $[eR(1-e)]^2=[(1-e)Re]^2=\{0\}.$

Now, let us consider the following rings and trusses:
\begin{enumerate}
\item For every $x\in eRe,$ by \cref{eq:sq}, \cref{eq:sq3} and $ex=xe=x,$ we get $x=-x^2=x^2$. Let $T_1$ be a circle truss on a ring $eRe.$

\item For every $x\in (1-e)R(1-e)$ by \cref{eq:sq} and the fact that $ex=xe=0,$ we get $x^2=x.$ Let $T_2$ be a truss $\mathrm{T}((1-e)R(1-e)).$

\item For every $x\in (1-e)Re,$ $ex=0$ and $xe=x.$ Furthermore, for any $x,y\in (1-e)Re,$  $xy=0.$ Therefore $(e+x)(e+y)=e+x.$ Let $T_3$ be a left zero truss on a zero multiplication ring $(1-e)Re$, i.e. the truss multiplication is $x\circ y=x$ for all $x,y\in (1-e)Re.$

\item For every $x\in eR(1-e)$ we have that $ex=x$ and $xe=0.$ Moreover, for any $x,y\in eR(1-e),$  $xy=0,$ so $(e+x)(e+y)=e+y.$ Let $T_4$ be a right zero truss on a zero multiplication ring $eR(1-e),$ i.e. the truss multiplication is $x\circ y=y$ for all $x,y\in eR(1-e).$
\end{enumerate}
Observe that for every $x_1,y_1\in eRe,$ $x_2,y_2\in (1-e)R(1-e),$ $x_3,y_3\in eR(1-e),$ $x_4,y_4\in (1-e)Re,$ we have that
$$
[e+(x_1+x_2+x_3+x_4)]\cdot [e+(y_1+y_2+y_3+y_4)]=e+[(y_1+x_1+x_1y_1)+x_2y_2+x_3+y_4]
$$
It is a standard check to show that a map $e+(x_1+x_2+x_3+x_4)\mapsto (x_1,x_2,x_3,x_4)$ is an isomorphism of a truss $e+R$ on a truss $T_1\times T_2 \times T_3 \times T_4$ with multiplication given by
\begin{equation}\label{eq:main1}
(x_1,x_2,x_3,x_4)\cdot (y_1,y_2,y_3,y_4)=(y_1+x_1+x_1y_1,x_2y_2,x_3,y_4).
\end{equation}
\end{proof}

It is worth noting here that in a decomposition of a Boolean truss $T$ into $T_1,T_2,T_3,T_4$ from the Theorem \ref{thm:main}, the multiplication of $T_1$ is the sum in the Boolean algebra associated to $A$ and the multiplication of $T_2$ is the intersection in the Boolean algebra associated to $B.$ 

Furthermore, Theorem \ref{thm:main} is not only consistent with results from band theory, it is also realising rectangular bands as particular congruence classes in ring theory.

Having the decomposition of the Boolean truss, we can now investigate when two Boolean trusses are isomorphic.

\begin{theorem}\label{thm:iso}
    Let $T$ and $T'$ be Boolean trusses with decompositions as in Theorem \ref{thm:main}. Then $T$ is isomorphic to $T'$ if and only if $T_1\times T_2\cong T'_1\times T'_2,$ $T_3\cong T'_3$ and $T_4\cong T'_4.$
\end{theorem}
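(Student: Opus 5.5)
The implication from right to left is immediate: if $\varphi\colon T_1\times T_2\to T'_1\times T'_2$, $\psi_3\colon T_3\to T'_3$ and $\psi_4\colon T_4\to T'_4$ are isomorphisms of trusses, then $\varphi\times\psi_3\times\psi_4$ is an isomorphism $T\to T'$. This holds because heap operations and multiplication in a product are computed componentwise. For the converse, I would recover each of the three pieces from $T$ by a construction that refers only to the truss structure. Such constructions are automatically carried along by any isomorphism $f\colon T\to T'$.

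I would work in the model of Theorem \ref{thm:main}. Elements of $T$ are quadruples $a=(a_1,a_2,a_3,a_4)$, and the multiplication is given by \eqref{eq:main1}: $ab=(a_1\circ b_1,a_2b_2,a_3,b_4)$. The first two coordinates come from commutative trusses. Hence $ab=ba$ holds if and only if $a_3=b_3$ and $a_4=b_4$.

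\emph{Step 1: the factor $T_1\times T_2$.} By the above, the centraliser of $a$ is
$$C_T(a)=T_1\times T_2\times\{a_3\}\times\{a_4\}.$$
It is a subtruss by the earlier lemma, and as a truss it is isomorphic to $T_1\times T_2$. Any isomorphism $f$ satisfies $f(C_T(a))=C_{T'}(f(a))$, so $f$ restricts to an isomorphism $T_1\times T_2\cong T'_1\times T'_2$.

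\emph{Step 2: the rectangular part.} The relation ``$a$ commutes with $b$'' is an equivalence relation here. It is exactly the kernel of the projection $\pi\colon T\to T_3\times T_4$, which is a surjective truss homomorphism, so the relation is a congruence. Because the relation is defined intrinsically, $f$ maps it onto the corresponding relation on $T'$. Therefore $f$ induces an isomorphism of the quotients, $T_3\times T_4\cong T'_3\times T'_4$, and these are rectangular trusses.

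\emph{Step 3: separating $T_3$ from $T_4$.} In a rectangular truss $L\times R=T_3\times T_4$ the product is $(l,r)(l',r')=(l,r')$. Consequently $uv=u$ holds if and only if $u$ and $v$ have the same $R$-coordinate. For a fixed $u$, the set $\{v : vu=v\}$ is therefore $L\times\{r\}$. This set is a subheap, it is closed under multiplication, and its multiplication is left zero; so it is isomorphic to $\zZ_\lL(T_3)$. Symmetrically, $\{v : uv=v\}=\{l\}\times R$ is isomorphic to $\zZ_\rR(T_4)$. The isomorphism of Step 2 carries these intrinsically defined sets onto the corresponding sets in $T'_3\times T'_4$. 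This gives $T_3\cong T'_3$ and $T_4\cong T'_4$, and Lemma \ref{lem:zerogroup} lets one pass freely between isomorphisms of these trusses and isomorphisms of their underlying heaps.

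I expect the main care to lie in Steps 2 and 3, which require checking that the defining subsets and relations are genuinely intrinsic and are congruences or subtrusses of the right type. One must also note that all trusses involved are non-empty, so that the sets $C_T(a)$, $L\times\{r\}$ and $\{l\}\times R$ actually exist.
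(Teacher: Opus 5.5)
Your proposal is correct. Step 1 is exactly the paper's argument via $C_T(a)=T_1\times T_2\times\{a_3\}\times\{a_4\}$.

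For the rectangular factors you take a different route. The paper never leaves $T$. It uses the subtrusses
\[
L_T(a)=\{x\in T\mid xa=x,\ ax=a\}=\{a_1\}\times\{a_2\}\times T_3\times\{a_4\}
\]
and
\[
R_T(a)=\{x\in T\mid ax=x,\ xa=a\}=\{a_1\}\times\{a_2\}\times\{a_3\}\times T_4 ,
\]
and notes that an isomorphism carries them onto $L_{T'}(f(a))$ and $R_{T'}(f(a))$. In that route the second equation is what pins down the Boolean coordinates. The condition $xa=x$ alone only gives $x_1a_1=a_1$ and $x_2a_2=x_2$. So your one-equation sets $\{v\mid vu=v\}$ and $\{v\mid uv=v\}$ would not isolate $T_3$ and $T_4$ inside $T$ itself.

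You avoid this by first factoring out the commutation relation, which you correctly identify with the kernel of the projection $T\to T_3\times T_4$. Only after that do you use the rectangular-band description. This costs you an extra step: you must check that the relation is a congruence and that $f$ descends to an isomorphism of the quotients. In return you prove slightly more, namely that the rectangular part $T_3\times T_4$ is an intrinsic quotient of $T$. You also reduce the splitting into $T_3$ and $T_4$ to the situation of Lemma \ref{lem:rect}.

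The paper's route is shorter and works only with subtrusses. You also write out the easy converse direction, which the paper leaves implicit.
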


\begin{proof}
Let $\Phi: T\to T'$ be a truss isomorphism, $a=(a_1,a_2,a_3,a_4)\in T$ and $\Phi(a)=b=(b_1,b_2,b_3,b_4).$ Then by \eqref{eq:main1} and a simple check, we obtain:

    \begin{align}&C_T(a)=T_1\times T_2\times \{a_3\}\times \{a_4\}\cong T_1\times T_2,\label{eq:iso:1}\\  &C_{T'}(b)=T'_1\times T'_2\times \{b_3\}\times \{b_4\}\cong T'_1\times T'_2,\\  &L_T(a)=\{x\in T\ |\ xa=x\ \land\ ax=a\}=\{a_1\}\times\{a_2\}\times T_3\times \{a_4\}\cong T_3,\\ &L_{T'}(b)=\{x'\in T'\ |\ x'b=x'\ \land\ bx'=b\}=\{b_1\}\times\{b_2\}\times T'_3\times \{b_4\}\cong T'_3, \\ &R_T(a)=\{x\in T\ |\ ax=x\ \land\ xa=a\}=\{a_1\}\times\{a_2\}\times \{a_3\}\times \{T_4\}\cong T_4,\\  &R_{T'}(b)=\{x'\in T'\ |\ bx'=x'\ \land\ x'b=b\}=\{b_1\}\times\{b_2\}\times \{b_3\}\times \{T'_4\}\cong T'_4.
\end{align}

Now, as all cases are proven in a similar manner, we present only the proof for $T_3\cong T'_3.$ Let $x\in L_T(a),$ then 
$$
\Phi(x)b=\Phi(xa)=\Phi(x)\quad \&\quad b\Phi(x)=\Phi(ax)=\Phi(a)=b,
$$
thus $\Phi(x)\in L_{T'}(b),$ and hence $\Phi(L_T(a))\subseteq L_{T'}(b).$ Moreover, by the same calculation for $\Phi^{-1},$ $\Phi^{-1}(L_{T'}(b))\subseteq L_T(a).$ Therefore $\Phi(L_T(a))=L_{T'}(b)$ and $T_3\cong T'_3.$
\end{proof}

The following corollary is a truss version of the Lemma \ref{lem:rect}:
\begin{corollary}\label{cor:rect}
A truss $T$ is rectangular if and only if $T\cong L\times R$ for some left zero truss $L$ and right zero truss $R.$ Moreover, $\rR(T)$ is a zero multiplication ring. 
\end{corollary}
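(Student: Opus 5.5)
The plan is to deduce the corollary from Theorem \ref{thm:main} together with the centraliser computation \eqref{eq:iso:1}. For the easy direction, take $L$ a left zero truss and $R$ a right zero truss. In $L\times R$ we have
$$(a,b)(c,d)(a,b)=(a,d)(a,b)=(a,b),$$
so $L\times R$ is rectangular.

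For the converse, let $T$ be rectangular.
\begin{enumerate}
\item By Proposition \ref{prop:rectbool}, $T$ is Boolean, and $C_T(a)=\{a\}$ for every $a\in T$.
\item By Theorem \ref{thm:main}, $T\cong T_1\times T_2\times T_3\times T_4$ with $T_1=\tT(A,\circ)$, $T_2=\tT(B)$, $T_3$ left zero and $T_4$ right zero.
\item Isomorphisms preserve centralisers, so the isomorphic copy of $T$ also has singleton centralisers.
\item By \eqref{eq:iso:1}, $C_T(a)\cong T_1\times T_2$. Hence $|T_1\times T_2|=1$, which forces $A=B=\{0\}$.
\item Dropping the trivial factors gives $T\cong T_3\times T_4$, a left zero truss times a right zero truss.
\end{enumerate}
This step is where care is needed. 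One must check that the centraliser formula \eqref{eq:iso:1} really follows from \eqref{eq:main1}. Elements of $T_1\times T_2$ commute, since $A$ and $B$ are Boolean and hence commutative and $\circ$ is symmetric. Commuting in the third coordinate forces $x_3=a_3$, and commuting in the fourth forces $y_4=b_4$.

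For the ``moreover'' part, write $T\cong L\times R$ and pick $e=(l_0,r_0)$. Then $a\bullet_e b=[[ab,ae,e],eb,e^2]$ is computed componentwise. In each component, the computation in the proof of Lemma \ref{lem:zero=zero} returns $e$, using \eqref{eq:mal}. Hence $a\bullet_e b=e$, the zero of $(T,+_e)$, so $\rR(T)$ has zero multiplication.

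The construction $\rR(-,e)$ is compatible with products. Thus $\rR(L\times R,(l_0,r_0))\cong \rR(L,l_0)\times\rR(R,r_0)$, a product of zero rings. By Remark \ref{rem:iso}, $\rR(T)$ is therefore zero multiplication. Alternatively, one can compute directly: $ab=(a_L,b_R)$, $ae=(a_L,r_0)$, $eb=(l_0,b_R)$, and the heap bracket then cancels to $e$.

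The main obstacle is small: verifying the centraliser description, so that the factors $T_1,T_2$ must be trivial.
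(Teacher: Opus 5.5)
Your proposal is correct and follows the paper's proof essentially step for step: Proposition~\ref{prop:rectbool} together with Theorem~\ref{thm:main} and the centraliser formula \eqref{eq:iso:1} forces $T_1\times T_2$ to be trivial, and the ``moreover'' part comes from Lemma~\ref{lem:zero=zero} applied componentwise, the compatibility of $\rR(-)$ with products, and Remark~\ref{rem:iso}. There is one notational slip: commuting in the fourth coordinate forces $x_4=a_4$, not ``$y_4=b_4$''. Your explicit remark that isomorphisms transport centralisers is a detail the paper leaves implicit.
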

\begin{proof}
   Let $T$ be a rectangular truss. Then by the Proposition \ref{prop:rectbool}, $T$ is Boolean and $C_T(a)=\{a\}$ for all $a\in T.$ Therefore, by the Theorem \ref{thm:main} and \cref{eq:iso:1} from the proof of Theorem \ref{thm:iso}, we get $T\cong T_3\times T_4,$ where $T_3$ is a left zero truss and $T_4$ is a right zero truss. Conversely, it is a simple check that $L\times R$ is a rectangular truss. The ring $\rR(T)$ is a zero multiplication ring. By the the Lemma \ref{lem:zero=zero}, $\rR(L)$ and $\rR(R)$ are zero multiplication rings. By the Remark \ref{rem:iso}, $\rR(T)$ is isomorphic to $\rR(L\times R).$ Clearly, $\rR(L\times R)=\rR(L)\times \rR(R),$ by the construction of the ring $\rR(L\times R)$ given in the Lemma \ref{lem:const:bullet}.
\end{proof}

The following remark is a straightforward consequence of the Corollary \ref{cor:rect} and the Lemma \ref{lem:zerogroup}.

\begin{remark}
    Two rectangular trusses $L\times R$ and $L'\times R'$ are isomorphic if and only if $L\cong L'$ and $R\cong R'.$ Moreover, $L\cong L'$ $(R\cong R')$ if and only if $(L,+_e)\cong (L',+_{e'})$ $((R,+_e)\cong (R',+_{e'}))$ for all $e\in L\ (e\in R)$ and $e'\in L'\ (e'\in R').$
\end{remark}

By Theorem \ref{thm:iso}, the characterisation of all Boolean trusses simplifies to describing all Boolean trusses of the form $T_1 \times T_2.$ Moreover, by the proof of Theorem \ref{thm:main}, $T_1 \times T_2$ is a product of trusses constructed on some Boolean ring $P$ which is a direct sum of its two ideals $A$ and $B$. The correspondence is as follows, $T$ is a {\bf commutative Boolean truss} if and only if there exist ring $P$ and ideals $A$ and $B$ of $P$ such that $P=A\oplus B$ and $\tT(A,\circ)\times\tT(B)=T.$ For convenience, we denote
$$
\bB(A,B):=\tT(A,\circ)\times \tT(B).
$$
Therefore, let us further investigate the various cases that arise for an arbitrary Boolean ring $P$.

\begin{theorem}\label{thm:class1}
    Let $A,A,B,B'$ be ideals of a Boolean ring $P$ such that $P=A\oplus  B=A'\oplus B'.$ Then the following are equivalent:
    \begin{enumerate}
        \item $\bB(A,B)\cong \bB(A',B')$
        \item there exist $a\in A$ and $b\in B$ such that $A'\cong(1-a)A\oplus bB$ and $B'\cong aA\oplus (1-b)B$
    \end{enumerate}
\end{theorem}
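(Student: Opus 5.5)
The plan is to exploit the fact that a truss isomorphism is in particular an isomorphism of the underlying heaps. A heap isomorphism is affine: it is a group isomorphism of retracts followed by a translation. Translations of the Boolean ring $P$ act on $\bB(A,B)$ in a controlled way, which we compute explicitly. Throughout, $1-a$ is interpreted in the Dorroh extension, as in the proof of Theorem \ref{thm:main}, so $(1-a)A=\{x-ax\mid x\in A\}$. For every $a\in A$ we have the ideal decomposition $A=aA\oplus(1-a)A$, via $x=ax+(x-ax)$, and similarly $B=bB\oplus(1-b)B$.

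\emph{Key computation.} Write the multiplication of $\bB(A,B)$ on $P=A\oplus B$ as
$$(x_A+x_B)*(y_A+y_B)=(x_A+y_A+x_Ay_A)+x_By_B,$$
that is, join on $A$ and meet on $B$. Fix $c=a+b$ with $a\in A$, $b\in B$, and transport $*$ along the heap automorphism $x\mapsto x+c$ by setting $x\ast_c y=(x+c)*(y+c)-c$. I will check componentwise that $\ast_c$ behaves as follows.
\begin{itemize}
\item On $aA$: the element $a$ is a unit, and $x\mapsto x+a$ is complementation, which exchanges join and meet. So $\ast_c$ is the ring multiplication there.
\item On $(1-a)A$: the translation does nothing, so $\ast_c$ is still the circle operation.
\item On $bB$ and $(1-b)B$: symmetrically, $\ast_c$ is the circle operation on $bB$ and the ring multiplication on $(1-b)B$.
\end{itemize}
The cross terms vanish because the summands are orthogonal ideals. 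Hence translation by $c$ is a truss isomorphism
$$\bB(A,B)\cong \bB\bigl((1-a)A\oplus bB,\; aA\oplus(1-b)B\bigr).$$

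\emph{(2)$\Rightarrow$(1).} Given ring isomorphisms $A'\cong(1-a)A\oplus bB$ and $B'\cong aA\oplus(1-b)B$, take their direct sum. By the computation in Lemma \ref{lem:isoringcirc}, applied to the $\tT(-,\circ)$ component, and trivially for the $\tT(-)$ component, this gives $\bB\bigl((1-a)A\oplus bB,\,aA\oplus(1-b)B\bigr)\cong\bB(A',B')$. Composing with the key computation yields (1).

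\emph{(1)$\Rightarrow$(2).} Let $\Phi:\bB(A,B)\to\bB(A',B')$ be a truss isomorphism, and put $c=\Phi^{-1}(0)=a+b$. By the key computation, $\Psi:=\Phi\circ(x\mapsto x+c)$ is a truss isomorphism $\bB(A_1,B_1)\to\bB(A',B')$, where $A_1=(1-a)A\oplus bB$ and $B_1=aA\oplus(1-b)B$, and $\Psi(0)=0$. A heap homomorphism preserving $0$ is additive. The two components can be recovered intrinsically from $0$:
$$A_1=\{x\mid x*0=x\},\qquad B_1=\{x\mid x*0=0\},$$
since $x*0=x_A$ for $x=x_A+x_B$. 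The same description holds for $A'$ and $B'$. Since $\Psi$ fixes $0$ and preserves $*$, it maps $A_1$ onto $A'$ and $B_1$ onto $B'$. Its restrictions are therefore truss isomorphisms $\tT(A_1,\circ)\cong\tT(A',\circ)$ and $\tT(B_1)\cong\tT(B')$. The first gives a ring isomorphism by Lemma \ref{lem:isoringcirc}. The second is an additive, multiplicative bijection, hence a ring isomorphism. This proves (2).

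The main obstacle I expect is the key computation itself. It needs a careful verification that complementation relative to $a$ really swaps join and meet on $aA$ while leaving $(1-a)A$ untouched. It also requires checking that the mixed terms between $aA$ and $(1-a)A$, and between $A$ and $B$, all vanish, which relies on $x(1-a)\cdot ay=0$ and $AB=0$.
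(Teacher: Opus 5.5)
Your proposal is correct and follows essentially the paper's proof: translate by $c=\Phi^{-1}(0)=a+b$ to identify $\bB(A,B)$ with $\bB\bigl((1-a)A\oplus bB,\,aA\oplus(1-b)B\bigr)$. Then use that an isomorphism fixing $0$ is additive and carries the components, characterized by $x\cdot 0=x$ and $x\cdot 0=0$, onto $A'$ and $B'$. The difference is only in packaging: the paper obtains the translated structure as $T(\sigma',0)$ with $\sigma'=\sigma-l_{a+b}$ via \cite[Lemma 3.9]{ABR}, and proves (2)$\Rightarrow$(1) through Lemma \ref{lem:circmult} on the unital summands $aA$ and $bB$, whereas you verify the transported multiplication directly, which also spells out the step the paper states tersely (that $\varepsilon$ maps $A_1,B_1$ onto $A',B'$ and is a ring isomorphism).
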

\begin{proof}
$(1)\implies(2)$ Since $P=A\oplus B,$ by the \cref{eq:main1} from the proof of Theorem \ref{thm:iso}, the multiplication of $\bB(A,B)$ is given by
$$
(a_1+b_1)\cdot(a_2+b_2)=(a_1a_2+a_1+a_2)+b_1b_2
$$
for all $a_1,a_2\in A$ and $b_1,b_2\in B.$
Both sets $A$ and $B$ can be described as follows:
$$
A=\{x\in T\ |\  x\cdot  0=0\cdot x=x\}\quad \&\quad B=\{x\in T\ |\ x\cdot 0=0\cdot x=0\}.
$$
Indeed, 
$$
0\cdot(a_1+b_1)=(a_1+b_1)\cdot 0=a_1
$$
for all $a_1+b_1\in P.$ Furthermore, let $\sigma:P\to P$ be given by $\sigma(a_1+b_1):=a_1$ for all $a_1\in A$ and $b_1\in B.$ Then $\sigma\in \mathrm{End}_P(P),$ and by Lemma \ref{lem:hom}, there exists a truss $T(\sigma,0)$ on $P.$ In fact $T=T(\sigma,0)$ as
\begin{equation}\label{eq:alltruss}
(a_1+b_1)\diamond (a_2+b_2)=a_1a_2 +b_1b_2+ \sigma(a_1+b_1)+\sigma(a_2+b_2)=a_1a_2 +b_1b_2+a_1+a_2
\end{equation}
for all $a_1,a_2\in A$ and $b_1,b_2\in B.$ Let $\psi: P\to P$ be a bijection given by $\psi(x):=x+a+b$ for a fixed $a\in A$ and $b\in B.$ Then $\psi$ is an isomorphism given in \cite[Lemma 3.9]{ABR} from $T(\sigma,0)$ to $T(\sigma',0),$ where $\sigma'=\sigma-l_{a+b}$ and $l_{a+b}(x):=(a+b)x.$

For any $x\in A$ and $y\in B,$ we have 
$$
\sigma'(x+y)=\sigma(x+y)+(x+y)(a+b)=x+ax+by,
$$
that is, $$A_1:=\im(\sigma')=(1-a)A\oplus bB.$$ Furthermore, $x+ax+by=0$ if and only if $(1+a)x=0$ and $by=0.$ Therefore, $x\in aA$ and  $y\in (1+b)B.$ Moreover, 
$$
B_1:=\ker(\sigma')=aA\oplus (1-b)B.
$$

Finally, if $\varphi: \bB(A,B)\to \bB(A',B')$ is an isomorphism of trusses, then there exist $a\in A$ and $b\in B$ such that $\varphi(a+b)=0.$ Setting $\varepsilon:= \varphi\circ \psi^{-1},$ we obtain the isomorphism between $T(\sigma',0)$ and $\bB(A',B')$ such that $\varepsilon(0)=0$ as $\psi^{-1}(0)=a+b.$ Therefore, by \cref{eq:abs} and the discussion preceding it, $\varepsilon:P\to P'$ is a ring isomorphism, and 
$$
\varepsilon(A_1)=A'\quad 
 \text{and}\quad \varepsilon(B_1)=B'.
 $$

$(2)\implies (1)$ It is easy to check that both $\tT(-)$ and $\tT(-,\circ)$ preserve finite products. Since $aA$ and $bB$ are unital, we have $\tT(aA,\circ)\cong \tT(aA)$ and $\tT(bB,\circ)\cong\tT(bB)$ by Lemma \ref{lem:circmult}. Furthermore, by the Lemma \ref{lem:isoringcirc}, for any two rings $R$ and $S,$  $\tT(R,\circ)\cong\tT(S,\circ)$ if and only if $R\cong S.$ Therefore,
$$
\begin{aligned}
\bB(A,B)&=\tT(A,\circ)\times \tT(B)\cong \tT(aA\oplus (1-a)A, \circ)\times \tT((1-b)B\oplus bB)\\ &\cong \tT((1-a)A,\circ)\times \tT(aA, \circ)\times \tT((1-b)B)\times \tT(bB)\\ &\cong \tT((1-a)A\oplus bB, \circ)\times \tT(aA\oplus (1-b)B)\cong \tT(A',\circ)\times \tT(B')=\bB(A',B').
\end{aligned}$$
\end{proof}

\begin{corollary}
    Let $P$ be a unital Boolean ring, then $\bB(A,B)\cong \tT(P)$ for all ideals $A$ and $B$ of $P$ such that $P=A\oplus B.$
\end{corollary}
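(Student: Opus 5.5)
Since $P=A\oplus B$ is a unital Boolean ring, its identity decomposes as $1=a_0+b_0$ with $a_0\in A$ and $b_0\in B$. Both $A$ and $B$ are then unital rings with identities $a_0$ and $b_0$: for $x\in A$ we have $x=1\cdot x=a_0x+b_0x$, and $b_0x\in A\cap B=\{0\}$, so $x=a_0x$. The argument is symmetric for $B$.

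The shortest route is a direct computation. By Lemma \ref{lem:circmult}, since $A$ is unital, $\tT(A,\circ)\cong\tT(A)$. Hence
$$\bB(A,B)=\tT(A,\circ)\times\tT(B)\cong\tT(A)\times\tT(B).$$
Since $P=A\oplus B$ is a direct sum of ideals, the map $(x,y)\mapsto x+y$ is a ring isomorphism $A\times B\to P$. It therefore induces a truss isomorphism $\tT(A)\times\tT(B)\cong\tT(A\times B)\cong\tT(P)$, using that $\tT(-)$ preserves finite products, as already observed in the proof of Theorem \ref{thm:class1}.

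Alternatively, we can derive the result from Theorem \ref{thm:class1} with $A'=0$ and $B'=P$. Take $a=a_0\in A$ and $b=0\in B$. Then $(1-a)A=A-a_0A=0$, $bB=0$, $aA=A$ and $(1-b)B=B$. So $A'=0\cong(1-a)A\oplus bB$ and $B'=P=A\oplus B\cong aA\oplus(1-b)B$. Condition (2) of Theorem \ref{thm:class1} holds, hence $\bB(A,B)\cong\bB(0,P)=\tT(0,\circ)\times\tT(P)\cong\tT(P)$, since $\tT(0,\circ)$ is a one-point truss.

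The only point needing care is verifying that $A$ and $B$ inherit identities from the decomposition $1=a_0+b_0$, which is the routine check done at the start. With that in hand, either route completes the proof immediately.
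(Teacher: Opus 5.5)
Your proposal is correct. Your second route is exactly the paper's proof. There one writes $1=a_1+b_1$ and applies Theorem \ref{thm:class1} with $a=a_1$, $b=0$ to obtain $A'\cong\{0\}$ and $B'\cong P$, hence $\bB(A,B)\cong\bB(\{0\},P)=\tT(P)$. Your preliminary check that $a_0$ is the identity of $A$ is precisely what justifies $(1-a_0)A=0$ and $a_0A=A$, which the paper leaves implicit.

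Your primary route is genuinely different, because it bypasses the classification theorem altogether. You apply Lemma \ref{lem:circmult} to the unital ring $A$, then recombine via the ring isomorphism $A\times B\to P$, $(x,y)\mapsto x+y$. In effect this inlines the $(2)\Rightarrow(1)$ half of Theorem \ref{thm:class1} in the degenerate case $a=1_A$, $b=0$. There the four pieces $aA,(1-a)A,bB,(1-b)B$ collapse to $A,0,0,B$. The gain is that your argument is self-contained and rests only on the elementary Lemma \ref{lem:circmult}. It also uses only that $A$ is unital, so it proves the half of Corollary \ref{cor:principal} in which $A$ is principal as well. The paper's route, by contrast, is a one-liner once Theorem \ref{thm:class1} is available, and it fits the paper's strategy of reading off isomorphism types from condition (2) of that theorem.
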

\begin{proof}
    Let $a_1+b_1=1\in P$ for some $a_1\in A$ and $b_1\in B.$ By taking, in Theorem \ref{thm:class1}, $a=a_1$ and $b=0,$ we get that $A'\cong \{0\}$ and $B'\cong P.$ Therefore, $$\bB(A,B)\cong \bB(A',B')\cong\bB(\{0\},P)=\tT(P).$$
\end{proof}

\begin{corollary}\label{cor:principal}
       Let $P=A\oplus B$ be a non-unital Boolean ring such that either $A$ or $B$ is a principal ideal of $P$, then $\bB(A,B)\cong \tT(P)$  or $\bB(A,B)\cong \tT(P,\circ).$  
\end{corollary}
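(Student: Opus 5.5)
We reduce to Theorem \ref{thm:class1} with $P$ fixed and choose $a\in A$, $b\in B$ so that one of the two new summands becomes $\{0\}$. The point is that a principal ideal of a Boolean ring is unital: if $A=pP$ for some $p\in P$, then every element of $A$ has the form $px$, and $p\cdot px=p^2x=px$. Hence $p$ is the identity of $A$, and in fact $p\in A$.

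\textbf{Case 1: $A$ is principal.} Let $A=pP$, so $p$ is the identity of $A$. We apply Theorem \ref{thm:class1} in the direction $(2)\implies(1)$ with $a=p$ and $b=0$. Then $(1-a)A=\{x-px\mid x\in A\}=\{0\}$ and $bB=\{0\}$, so the candidate first summand is $A'=\{0\}$. Similarly $aA=A$ and $(1-b)B=B$, so the candidate second summand is $B'=A\oplus B=P$.

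To invoke the theorem literally we need ideals $A',B'$ of $P$ with $P=A'\oplus B'$. The choice $A'=\{0\}$, $B'=P$ works, since $\{0\}\cong(1-p)A\oplus 0\cdot B$ and $P\cong pA\oplus B$. Theorem \ref{thm:class1} then gives
$$\bB(A,B)\cong\bB(\{0\},P)=\tT(\{0\},\circ)\times\tT(P)\cong\tT(P),$$
because the factor $\tT(\{0\},\circ)$ is a one-point truss.

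Alternatively, the chain of isomorphisms in the proof of $(2)\implies(1)$ can be used directly. One writes $\tT(A,\circ)\cong\tT(A)$ by Lemma \ref{lem:circmult}, since $A$ is unital, and then uses that $\tT(-)$ preserves finite products: $\tT(A)\times\tT(B)\cong\tT(A\oplus B)=\tT(P)$. This route avoids any bookkeeping about the formal statement of Theorem \ref{thm:class1}.

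\textbf{Case 2: $B$ is principal.} Let $B=qP$, so $q$ is the identity of $B$. We take $a=0$ and $b=q$. This gives $A'=A\oplus qB=A\oplus B=P$ and $B'=0\cdot A\oplus(1-q)B=\{0\}$. Directly, Lemma \ref{lem:circmult} gives $\tT(B)\cong\tT(B,\circ)$, and then $\tT(A,\circ)\times\tT(B,\circ)\cong\tT(A\oplus B,\circ)=\tT(P,\circ)$.

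The only genuinely needed ingredients are two facts. The first is that principal ideals of a Boolean ring are unital, as shown above. The second is that $\tT(-,\circ)$ preserves finite products. This holds because for $(x_1,x_2)\circ(y_1,y_2)$ computed in $R_1\times R_2$, the expression $x+y+xy$ is evaluated componentwise. The non-unitality hypothesis on $P$ is not used in the argument. It only signals that the two outcomes are genuinely different: by Lemma \ref{lem:circmult}, $\tT(P)\not\cong\tT(P,\circ)$ when $P$ is not unital. I expect no real obstacle here.
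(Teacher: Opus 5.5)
Your proof is correct and follows the paper's argument: you apply Theorem \ref{thm:class1} with $a=1_A$, $b=0$ when $A$ is principal (giving $\bB(\{0\},P)=\tT(P)$) and with $a=0$, $b=1_B$ when $B$ is principal (giving $\bB(P,\{0\})=\tT(P,\circ)$). Your alternative route via Lemma \ref{lem:circmult} is just the $(2)\Rightarrow(1)$ computation specialised to this case, and you also justify that principal ideals are unital, which the paper leaves implicit; the one unstated step there is $(p)=\mathbb{Z}p+pP=pP$, which holds because $2p=0$ and $p=p^2$.
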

\begin{proof}
    If $B$ is principal, then $B$ is unital with identity $1_B.$ Taking in Theorem \ref{thm:class1}, $a=0$ and $b=1_B,$ we have
    $$
    \bB(A,B)\cong \bB(P,\{0\})=\tT(P,\circ).
    $$
    Analogously, if $A$ is principal, it is unital with identity $1_A.$ By taking $a=1_A$ and $b=0,$ we have 
    $$
    \bB(A,B)\cong \bB(\{0\},P)=\tT(P).
    $$
\end{proof}

\begin{corollary}
Let $P=A\oplus B$ for some non-principal ideals $A$ and $B$ of $P,$ then $\bB(A,B)$ is not isomorphic to either $\tT(P)$ or $\tT(P,\circ).$
\end{corollary}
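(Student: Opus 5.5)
The plan is to reduce both non-isomorphisms to Theorem \ref{thm:class1}, using the identifications $\tT(P,\circ)=\bB(P,\{0\})$ and $\tT(P)=\bB(\{0\},P)$. Both decompositions $P=P\oplus\{0\}$ and $P=\{0\}\oplus P$ are decompositions of $P$ into a direct sum of two ideals, so Theorem \ref{thm:class1} applies with $(A',B')$ equal to $(P,\{0\})$ or to $(\{0\},P)$. In each case I will show that an isomorphism forces $B$, respectively $A$, to be principal, which contradicts the hypothesis.

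\emph{Case $\bB(A,B)\cong\tT(P,\circ)$.} Apply $(1)\Rightarrow(2)$ of Theorem \ref{thm:class1} with $A'=P$ and $B'=\{0\}$. This gives $a\in A$ and $b\in B$ with
$$\{0\}\cong aA\oplus(1-b)B.$$
Hence $(1-b)B=\{0\}$, understood as $y-by=0$ for all $y\in B$. Therefore $y=by$ for every $y\in B$, so
$$B=bB\subseteq bP\subseteq B,$$
where the last inclusion holds because $B$ is an ideal containing $b$. Thus $B=bP$ is principal, a contradiction. (The same relation also forces $aA=\{0\}$, hence $a=a^2=0$, though this is not needed.)

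\emph{Case $\bB(A,B)\cong\tT(P)$.} Apply the theorem with $A'=\{0\}$ and $B'=P$. This gives $\{0\}\cong(1-a)A\oplus bB$. So $x=ax$ for all $x\in A$, and the same argument yields $A=aA=aP$, which is principal, again a contradiction.

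The only delicate point is bookkeeping rather than mathematics. One must read $(1-a)A$ as $\{x-ax : x\in A\}$, since $P$ need not be unital. One must also note that the zero ideal counts as principal, since it is generated by $0$. With that convention the hypothesis excludes $A=\{0\}$ and $B=\{0\}$, so the degenerate cases are covered as well. Beyond this, the argument is just an application of Theorem \ref{thm:class1} together with the observation that, in a Boolean ring, an ideal $I$ satisfying $I=cI$ for some $c\in I$ equals $cP$.
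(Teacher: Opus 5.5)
Your proof is correct and follows essentially the same route as the paper. Both identify $\tT(P)=\bB(\{0\},P)$ and $\tT(P,\circ)=\bB(P,\{0\})$ and use Theorem~\ref{thm:class1} to force $A$ (resp.\ $B$) to be principal. The only difference is the direction of application: the paper takes the trivial decomposition as the source pair and obtains $A\cong pP$, leaving implicit that a ring isomorphic to $pP$ is unital and hence equals $1_A P$. You take $(A,B)$ as the source pair, so the vanishing of $(1-a)A$ gives the equality $A=aP$ directly.
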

\begin{proof}
    If $\bB(A,B)\cong \tT(P)=\bB(\{0\},P),$ then by Theorem \ref{thm:class1} exist $p\in P$ and $0\in \{0\},$ such that 
    $A\cong \{0\}\oplus pP=pP.$ Therefore, $A$ is principal and we get a contradiction. Thus $\bB(A,B)$ is not isomorphic to $\tT(P).$ Similarly, if $\bB(A,B)\cong \tT(P,\circ)=\bB(P,\{0\}),$ then $B\cong pP\oplus \{0\}=pP.$ Thus $B$ is principal, we get a contradiction, and  $\bB(A,B)$ is not isomorphic $\tT(P,\circ).$
\end{proof}

\begin{example}\label{ex:15}
    Let $A$ be a finitely generated Abelian group. Then
    $$
A\cong (\ZZ_2)^{\alpha_0}\times (\ZZ_{q_1})^{\alpha_1}\times\ldots\times  (\ZZ_{q_{n-1}})^{\alpha_{n-1}}\times \ZZ^{\alpha_n},
    $$
where $n\in \NN,$ $q_0:=2,$ $q_i$ are distinct powers of prime numbers and $\alpha_i\in \NN\cup\{0\}$ for $i\in\{0,1,\ldots,n\}.$ There are exactly 
$$
\binom{\alpha_0+2}{2}\prod\limits_{i=1}^{n}(\alpha_i+1)
$$
Boolean trusses on $A.$
Indeed, let us consider $A$ and its decomposition. Observe that on an Abelian group $Z:=(\ZZ_{q_1})^{\alpha_1}\times\ldots \times \ZZ^{\alpha_n},$ we can construct a rectangular truss. All such trusses are given by a product of left and right zero trusses. Therefore, we need to count all non-isomorphic decomposition of $Z=Z_1\times Z_2.$ This corresponds to finding all the possible choices of $(k_1,k_2,\ldots,k_n)$ for $k_i\in \{0,\ldots,\alpha_i\}$ and all $i\in\{1,\ldots,n\}.$ There are exactly $\prod\limits_{i=1}^{n}(\alpha_i+1)$ such choices. Now, on the part $(\ZZ_2)^{\alpha_0},$ we need to consider a decomposition of it into $3$ possible subgroups corresponding to Boolean truss, left zero truss and right zero truss. Observe that since group is finitely generated, Boolean ring is finite and unital, and thus circle truss on it is isomorphic to a ring truss. This corresponds to the following decomposition of $\alpha_0=n_1+n_2+n_3$ for some non-negative integers $n_1,n_2$ and $n_3.$ One can easily check that there are $$\binom{\alpha_0+2}{2}$$
such choices. Therefore, there are total of 
$$
\binom{\alpha_0+2}{2}\prod\limits_{i=1}^{n}(\alpha_i+1)
$$
non-isomorphic Boolean trusses described on $A.$
\end{example}
Before we approach our last example, let us recall a basic property of Boolean rings, see \cite[Theorem 2]{AK08}.
\begin{remark}\label{rem:boolprod}
    Let $R$ and $S$ be Boolean rings. Then $K\triangleleft R\times S$ if and only if $K=I\times J$ for $I\triangleleft R$ and $J\triangleleft S.$
\end{remark}

We conclude the paper with the following example

\begin{example}\label{ex:fartoolong}
    Let $F=\prod\limits_{i\in\NN}^{}F_i$ where $F_{i}\cong \mathbb{Z}_2,$ then $$P=\bigoplus\limits_{i\in \NN}^{}F_i$$ is an essential ideal of $F$ and by the Zorn's lemma there exists a maximal ideal $R$ of $F$ such that $P\subset R$ and $F/R\cong \ZZ_2$ as $F$ is Boolean.  Clearly, $R$ has no identity. If $R$ had an identity, then $R$ would be direct summand in $F,$ but $R$ is essential in $F.$ Let us consider the following examples:
    \begin{enumerate}
        \item\label{ex:1} Let $I,J\lhd R$ be such that $R=I\oplus J$ and $e_i\in F_i$ be an identity in $F_i$ for $i\in \NN.$ Then for any $n\in \NN,$ $e_n=i+j$ for some $i\in I$ and $j\in J.$ Indeed, 
    $$e_ni=i^2=i \quad \text{and}\quad e_nj=j^2=j.$$ Moreover, since $e_n\in F_n$ is an identity, we get $$e_n=i\quad \text{or}\quad e_n=j.$$
    Let $$X=\{n\in\NN\  |\   e_n\in I\}\quad \text{and} \quad Y=\{n\in\NN\ |\ e_n\in J \},$$ then $X\sqcup Y=\NN$ and $X,Y$ are non-empty as $P$ is essential ideal. Therefore, for all $i\in I$ and $j\in J,$ $$supp(i)\subseteq X\quad  \text{and} \quad supp(j)\subseteq Y.$$ Since $R$ is a maximal ideal of the Boolean ring $F,$ we have \begin{equation}\label{eq:aleph}F/R\cong \ZZ_2\quad  \text{and}  \quad F=R\sqcup(1+R).\end{equation} Thus, if $a\in F,$ then $a\in R$ or $1-a\in R.$ If $a=(a_n),$ $a_n=e_n$ for $n\in X$ and $a_n=0$ for $x\in Y.$  If $a\in R,$ then $a\in I,$ and $I=(a)=aR.$ Similarly, if $a\in 1+R,$ then $J=(1+a)=(1+a)R.$ In either case $I$ or $J$ is a principal ideal. That is, $R$ cannot be decomposed into two non-principal ideals. Furthermore, by Equation \eqref{eq:aleph}, we observe that the ring is infinite, $|R|=2^{\aleph_0}.$

If $a\in R,$ then $1-a\not\in R$ and $R=aR\oplus (1-a)R.$ Moreover $(1-a)R$ cannot be decomposed as a direct sum of two non-principal ideals. Indeed, if there would exist non-principal $I,J\triangleleft R,$ then 
\begin{equation}\label{eq:ideal}
   R= aR\oplus (I\oplus J)= (aR\oplus I)\oplus J. 
\end{equation}
Observe that $aR\oplus I$ and $J$ are non-principal, thus we get a contradiction with the fact that $R$ cannot be decomposed into direct sum of two non-principal ideals.

If $a\in R\setminus P,$

\begin{equation}\label{eq:afr}
aR=aF\cong F\quad \text{and} \quad (1-a)F\cong F 
\end{equation} because
\begin{equation}
|supp(1-a)|=\aleph_0=|supp(a)|.
\end{equation}
Furthermore, 
\begin{equation}\label{eq:decompositon}
    F=aR\oplus (1-a)F\quad \text{and}\quad R=aR\oplus (1-a)R.
\end{equation} 
Finally, 
\begin{equation}\label{eq:maxxx}
    (1-a)F/(1-a)R\cong \ZZ_2,
\end{equation}
 and by $\eqref{eq:maxxx},$ we get that \begin{equation}\label{eq:maxxxx}
     |(1-a)R|=2^{\aleph_0}.
 \end{equation}

   \item\label{ex:2} Let us consider a non-zero ideal $K\triangleleft P.$ Then for every $a=(a_k)_{k\in \NN},$ we have 
    $$
    a_ke_k=ae_k\in K,
    $$
Therefore, every ideal can be written as $I(M)=\bigoplus\limits_{m\in M}\{0,e_m\}$ for some subset $M\subseteq\mathbb{N}.$ Moreover, $I(\mathbb{N})=P$ and for every subset $M\subseteq\mathbb{N}$,
\[
I(M)\oplus I(\mathbb{N}\setminus M)=P.
\]

The only possible cases of commutative Boolean trusses on $P$ are the following:

\begin{enumerate}
    \item\label{N1}
    Both $M$ and $\mathbb{N}\setminus M$ are infinite. Then
    $I(M)\cong I(\NN\setminus M)\cong I(\NN)\cong P,$ and
    $$
    \bB(I(M), I(\NN\setminus M))\cong \bB(P,P)\cong \tT(P,\circ)\times\tT(P).
    $$
    \item\label{N2}
    If $M$ is a non-empty finite set, then $I(M)=aP,$ 
    $a=\sum\limits_{k\in M}e_k,$ $I(M)$ is principal, and hence
    $$
\bB(I(M),I(\NN\setminus M))\cong \bB(aP,I(\NN\setminus M))\cong \tT(P),
    $$
    by the proof of Corollary \ref{cor:principal}.
    \item\label{N3}
    Similarly, if $\mathbb{N}\setminus M$ is a non-empty finite set, then $I(\mathbb{N}\setminus M)=bP,$ $b=\sum\limits_{k\in M}e_k,$ $I(\mathbb{N}\setminus M)$ is principal, and hence 
    \[
    \bB(I(M),I(\NN\setminus M))\cong \bB(I(M),bP)\cong \tT(P,\circ),
    \]
    by the proof of Corollary \ref{cor:principal}.
\end{enumerate}

\item By the \eqref{ex:1}, on $R$ there are two non-isomorphic commutative Boolean trusses $$\tT(R)\quad \text{ and }\quad \tT(R,\circ).$$ By the \eqref{ex:2}, on $P$ there are three non-isomorphic commutative Boolean trusses $$\tT(P),\quad \tT(P,\circ)\quad \text{and} \quad \tT(P,\circ)\times \tT(P).$$
Therefore, by the Remark \ref{rem:boolprod}, there are at most six commutative Boolean trusses on $P\times R:$
\begin{enumerate}
\item 
$\tT(P\times R)=\tT(P)\times \tT(R),$
\item 
$\tT(P\times R,\circ)\cong \tT(P,\circ)\times \tT(R,\circ),$
\item 
$\bB(R,P)=\tT(R,\circ)\times \tT(P),$
\item 
$\bB(P,R)=\tT(P,\circ)\times \tT(R),$
\item 
$\bB(P,R\times P)=\tT(P,\circ)\times \tT(R\times P),$
\item 
$\bB(R\times P,P)=\tT(R\times P,\circ)\times \tT(P).$
\end{enumerate}
Since $P$ and $R$ are not unital, $\tT(P\times R)$ is the only truss with absorber. Similarly $\tT(P\times R,\circ )$ is the only unital truss. Therefore, both are non-isomorphic with others.

Proofs of (c) and (d), (c) and (e), (d) and (f) are analogous and follows by the same reasoning, so we will only prove that (c) and (d) are not isomorphic. Let us assume that
$$\bB(R,P)\cong\bB(P,R),$$
then by the Theorem \ref{thm:class1},
$$P\cong(1-a)R\oplus bP\quad  \text{and} \quad R\cong aR\oplus (1-b)P$$
for some $a\in R$ and $b\in P.$ If $a\in P,$ then 
$$|R|=2^{\aleph_0},\quad aR\oplus (1-b)P\cong aP\oplus (1-b)P\quad  \text{and}\quad |aP\oplus (1-b)P|=\aleph_0.$$ Therefore, we get a contradiction with our assumption.
If $a\in R\setminus P,$ then by \eqref{eq:maxxxx} $$|(1-a)R|=2^{\aleph_0}\quad \text{and} \quad |P|=\aleph_0.$$ Again, this leads to a contradiction. That is (c) and (d) are not isomorphic.

Similarly, proofs of (c) and (f), (d) and (e) are follows by the same arguments, so we will only show that (c) and (f) are not isomorphic. Let us assume that $$\bB(R,P)\cong \bB(R\times P,P),$$
then by the Theorem \ref{thm:class1},
$$R\times P\cong(1-a)R\oplus  bP\quad  \text{and} \quad P\cong aR\oplus (1-b)P$$
for some $a\in R$ and $b\in P$. By the Remark \ref{rem:boolprod}, there exist $R_1,R_2\triangleleft R$ and $P_1,P_2\triangleleft P$ such that 
\begin{equation}\label{ex:1-aR}
    (1-a)R\cong R_1\times P_1,\quad  bP\cong R_2\times P_2
\end{equation}
and 
\begin{equation}
   R_1\oplus R_2=R,\quad P_1\oplus P_2=P. 
\end{equation}
By \eqref{ex:1-aR}, $R_2$ and $P_2$ are principal. Similarly, by \eqref{ex:1-aR}, $R_1$ and $P_1$ are non-principal as $R$ and $P$ are not unital. Therefore, by the discussion after \eqref{eq:ideal}, we get a contradiction
 with the fact that $(1-a)R$ cannot be decomposed into two non-principal ideals. That is (c) and (f) are not isomorphic.

Finally, for (e) and (f) let us assume that 
$$\bB(R\times P,P)\cong \bB(P,R\times P)$$
then
$$P\cong(1-a)(R\times P)\oplus bP\quad  \text{and} \quad R\times P\cong a(R\times P)\oplus (1-b)P$$
for some $a\in R\times P$ and $b\in P.$ If $a=(a_1,a_2)\in P\times P,$ then $$|R\times P|=2^{\aleph_0},\quad a(R\times P)=a_1R\times a_2P=a_1P\times a_2P\quad \text{and}\quad |a(R\times P)\oplus (1-b)P|=\aleph_0.$$
Therefore, we get a contradiction with our assumption. If $a=(a_1,a_2)\in R\times P\setminus P\times P,$ then by \eqref{eq:maxxxx} 
$$
(1-a)(R\times P)=(1-a_1)R\times (1-a_2)P\quad \text{and}\quad |(1-a)(R\times P)\oplus bP|=2^{\aleph_0}.
$$
Therefore, since $|P|=\aleph_0,$ we get a contradiction with assumed isomorphism. That is, (e) and (f) are not isomorphic.
\end{enumerate}
\end{example}

\begin{remark}\label{rem:2k}
    Under the notation of the Example \ref{ex:fartoolong}, let $R$ be a maximal ideal of $F$ containing $P.$ It is well known that there exist $2^{2^{\aleph_0}}$ different  maximal ideals $R$, see the power of Stone Space in \cite[Page 45]{Sikorski}. Since every automorphism of $R$ is given by a permutation of $\NN,$ there are at most $2^{\aleph_0}$ such automorphisms. Therefore, there are infinitely many non-isomorphic maximal ideals $R$ containing $P.$
\end{remark}

\begin{remark}
    Under the notation of the Example \ref{ex:fartoolong}, by the \eqref{eq:aleph}, the ring $F$ is isomorphic to $R$ with adjoined identity denoted by $\tilde{R},$ see \cite[Section 1.1 and Exercise 12 to chapter 1]{DRKIR}. Thus, by Remark \ref{rem:2k} there exist $2^{\aleph_0}$ non-isomorphic rings $R$ which have the same ring $\tilde{R}.$ This means that \cite[Exercise 12 (c)]{DRKIR} has a false statement.
\end{remark}

\section*{Acknowledgments}
The authors would like to thank Karol Pryszczepko for his help with Example 3.18 and Remark 3.19.

\end{document}